\newtheoremstyle{theorem}
  {10pt}
  {10pt}
  {\sl}
 {}
  {\bf}
  {. }
  { }
  {}
\theoremstyle{theorem}
\newtheorem{theorem}{Theorem}[section]
\newtheorem{definition}{Definition}[section]
 \newtheorem{lemma}{Lemma}[section]
 \newtheorem{remark}{Remark}[section]
\numberwithin{equation}{section}
\newtheoremstyle{defi}
  {10pt}
  {10pt}
  {\rm}
  {}
  {\bf}
  {. }
  { }
  {}
\theoremstyle{defi}
\begin{document} % 正文开始 (the body of the article begins)

%\begin{CJK*}{GBK}{Song}

% 标题 (title)
\title{{\large  \textbf{Exponential Stability  of Solutions to   Stochastic Differential Equations Driven by $G$-L\'{e}vy Process\thanks{The work is supported in part by a NSFC Grant No. 11531006,
PAPD of Jiangsu Higher Education Institutions, and Jiangsu Center for Collaborative Innovation in Geographical Information Resource Development and Application.}   }}% 文章标题名 (full title name of the article)
\author{Bingjun Wang$^{1,2}$
\footnote{Email: wbj586@126.com} \  \  Hongjun Gao$^{1}$ \footnote{ Email: gaohj@njnu.edu.cn, gaohj@hotmail.com}
\\
\footnotesize{ 1. Jiangsu Provincial Key Laboratory for NSLSCS, School of Mathematical Science}
\\ \footnotesize{Nanjing Normal University, Nanjing 210023, P. R. China} % 地址名 (address)
\\
\footnotesize{2. Jinling Institute of Technology, Nanjing 211169, P. R. China}}
 }  % 推荐者名 (presenter)

 \date{}
\maketitle % 建立标题部分 (set up the part of title)
\footnotesize
%\begin{abstract}
\noindent \textbf{Abstract~~~} In this paper,  BDG-type  inequality  for $G$-stochastic calculus with respect to $G$-L\'{e}vy process is obtained and solutions of  stochastic differential equations driven by  $G$-L\'{e}vy process under  non-Lipschitz   condition  are constructed.  Moreover, we establish the mean square exponential stability and quasi sure exponential stability of the solutions  be means of $G$-Lyapunov function method. An example is presented to illustrate the efficiency of the obtained results.
\\[2mm]
\textbf{Key words~~~}  $G$-L\'{e}vy Process; Non-Lipschitz; Exponential stability.
\\[2mm]
\\
\textbf{2010 Mathematics Subject Classification~~~}60H05, 60H10, 60H20
%\end{abstract}

\section{Introduction}\label{s1}
In recent years much effort has been made to develop the theory of sublinear expectations connected with the volatility uncertainty and so-called $G$-Brownian motion.  $G$-Brownian motion was introduced by Shige Peng in \cite{S2} as a way to incorporate the unknown volatility into financial models. Its theory is tightly  associated  with the uncertainty problems involving an undominated family of probability measures.  Soon other connections have been discovered,  not only in the field of financial mathematics,  but also in the theory of path-dependent  partial differential equations or backward stochastic differential equations.  Thus $G$-Brownian motion and connected $G$-expectation are attractive mathematical objects.    We refer the reader to Gao \cite{F1}, Denis et al. \cite{L1}, Soner \cite{M1}, Bai et al. \cite{X2}, Li et al. \cite{X1},  Peng \cite{S3,S4,S5,S7,S8} and the references therein.

Returning however to the original problem of volatility uncertainty in the financial models, one  feels that $G$-Brownian motion is not  sufficient  to model the financial world, as both $G$- and the standard  Brownian motion share the same  property, which makes them often unsuitable for modelling, namely the continuity of paths.  Therefore, it is natural that Hu and Peng  \cite{M2} introduced the process with jumps, which they called $G$-L\'{e}vy process. Then Ren \cite{L2} introduced the representation of the sublinear expectation as an upper-expectation. In \cite{K1}, the author concentrated on establishing the integration theory for $G$-L\'{e}vy process with finite activity,   introduced the integral w.r.t the jump measure associated with the pure jump $G$-L\'{e}vy process and gave the It\^{o} formula for general $G$-It\^{o} L\'{e}vy process.

In \cite{F1}, the author proved the BDG inequality for $G$-stochastic calculus with respect to $G$-Brownian motion. In this article, we will prove the BDG-type inequality for $G$-stochastic calculus with respect to $G$-L\'{e}vy Process, which will be used in section 3.

In \cite{H1} and \cite{X2} the authors considered the stochastic differential equations driven by $G$-Brownian motion, where the coefficients do not satisfy the Lipschitz condition. Motivated by the aforementioned works, in section 3, the following stochastic differential equations driven by  $G$-L\'{e}vy process (GSDEs) is studied:
\begin{equation}
\begin{cases}
dY_{t}=b(t,Y_{t})dt+h_{ij}(t,Y_{t})d\langle B^{i},B^{j}\rangle_{t}+\sigma_{i}(t,Y_{t}) dB^{i}_{t}+\int_{R_{0}^{d}}K(t,Y_{t},z)L(dt,dz),
\\
Y_{t_{0}}=Y_{0},
\end{cases}
\end{equation}
 where $Y_{0}$ is the initial value with $\hat{\mathbb{E}}[|Y_{0}|^{2}]<\infty$,  $(\langle B^{i},B^{j}\rangle_{t})_{t\geq0}$ is the mutual variation process of the $d$-dimension $G$-Brownian motion $(B_{t})_{t\geq0}$, $L(\cdot,\cdot)$ is a Poisson random measure associated with the $G$-L\'{e}vy process $X$. The coefficients $b(\cdot,x),h_{ij}(\cdot,x),\sigma_{i}(\cdot,x)$ are in the space $M_{G}^{2}(0,T; R^{n})$, $K(\cdot,x,\cdot)\in H_{G}^{2}([0,T]\times R_{0}^{d}; R^{n})$ for each $x\in R^{n}$(these spaces will be defined in section 2).
If $b,h,\sigma $ and $K$ satisfy Lipschitz conditions,  Paczka \cite{K1} established the existence and uniqueness of solution for the stochastic differential equations (1.1) in the space $M_{G}^{2}(0,T;R^{n})$. However, many coefficients do not satisfy the Lipschitz condition. Therefore,  the extension to non-Lipschitz conditons is necessary.

Stability of stochastic differential equations has been well studied by many authors. In particular, by employing Lyapunov function method, Liu \cite{B1} obtained the comparison principles of $p$-th moment exponential stability for impulsive stochastic differential equations. For more details on this topic, one can see Peng and Jia \cite{S9}, Shen and Sun \cite{L4}, Wu and Han et al. \cite{S10}, Wu and Sun  \cite{H2}, Wu and Yan et al. \cite{X3} and the references therein. Very recently, Hu et al. \cite{L3} investigated the sufficient conditions for $p$-th moment stability of solutions to stochastic differential equations driven by  $G$-Brownian motion by means of a very special Lyapunov function. Zhang and Chen \cite{D1}, Fei and Fei \cite{W1} respectively established  the sufficient conditions for the exponential  stability and quasi sure exponential stability for a kind of special stochastic differential equations driven by  $G$-Brownian motion.  Then Ren  et al. \cite{Y1} established  the $p$-th  moment exponential stability and quasi sure exponential stability of  solutions  to impulsive stochastic differential equations driven by  $G$-Brownian motion.

To our best knowledge, there is no work reported on the mean square  exponential stability and quasi sure exponential stability of  solutions to GSDEs driven by $G$-L\'{e}vy process.  So, this should be developed. Motivated by the aforementioned works, the second part of this paper aims to establish the mean square  exponential stability and quasi sure exponential stability of  solutions to GSDEs driven by $G$-L\'{e}vy process by means of the $G$-Lyapunov function method.

The rest of this paper is organized as follows.  In section 2, we introduce some preliminaries and give the proof of BDG-type inequality for $G$-stochastic calculus with respect to $G$-jump measure.  In section 3, the solution of  (1.1) is constructed.    Section 4 is devoted to proving the mean square exponential stability and quasi sure   exponential stability of solutions to GSDEs driven by $G$-L\'{e}vy process by means of the $G$-Lyapunov function method. In the last section, an example is given to illustrate the effectiveness of the obtained results.

 %An example is given in Section 4 to illustrate the theory.
 %In the last section, concluding remarks are given.
%%%%%%%%%%%%%%%%%%%%%%%%%%%%%%%%%%%%%%%%%%%%%%%%%%%%%%
%%%%%%%%%%%%%%%%%%%%%%%%%%%%%%%%%%%%%%%%%%%%%%%%%%%%%%
\section{Preliminaries} \label{s2}
In this section, we introduce some notations and preliminary results in $G$-framework which are needed in the following section. More details can be found in \cite{K1,K2,L2,M2,S1,S6}.

\begin{definition}
 Let $\Omega$ be a given set and let $\mathcal{H}$ be a linear space of real valued functions defined on $\Omega$. Moreover, if $X_{i}\in \mathcal{H},i=1,2,...,d,$  then $\varphi(X_{1},...,X_{d})\in \mathcal{H}$ for all $\varphi \in C_{b,lip}(R^{d}),$ where $C_{b,lip}(R^{d})$ is the space of all bounded real-valued Lipschitz  continuous functions.
 A sublinear expectation $\mathbb{E}$ is a functional $\mathbb{E}:\mathcal{H}\rightarrow R$ satisfying the following properties: for all $X,Y\in \mathcal{H}$, we have

 (i) Monotonicity: $\mathbb{E}[X]\geq \mathbb{E}[Y]$ if $X\geq Y.$

 (ii) Constant preserving: $\mathbb{E}[C]=C$ for $C\in R.$

 (iii) Sub-additivity:  $\mathbb{E}[X+Y]\leq \mathbb{E}[X]+\mathbb{E}[Y].$

 (iv) Positive homogeneity: $\mathbb{E}[\lambda X]=\lambda \mathbb{E}[X]$ for  $\lambda\geq 0.$
\end{definition}
The tripe $(\Omega,\mathcal{H},\mathbb{E})$ is called a sublinear expectation space.  $X\in \mathcal{H}$ is called a random variable in $(\Omega,\mathcal{H},\mathbb{E})$. We often call $Y=(Y_{1},...,Y_{d}), Y_{i}\in \mathcal{H}$ a $d$-dimensional random vector in $(\Omega,\mathcal{H},\mathbb{E}).$
\begin{definition}
In a sublinear expectation space $(\Omega,\mathcal{H},\mathbb{E})$, a $n$-dimensional random vector $Y=(Y_{1},...,Y_{n})$ is said to be independent from an $m$-dimensional  random vector $X=(X_{1},...,X_{m})$ if for each $\varphi \in C_{b,lip}(R^{m+ n}),$
$$\mathbb{E}[\varphi (X,Y)]=\mathbb{E}[\mathbb{E}[\varphi(x,Y)]_{x=X}].$$
\end{definition}
\begin{definition}
Let $X_{1}, X_{2}$ be two $n$-dimensional random vectors defined on a  sublinear expectation space $(\Omega_{1},\mathcal{H}_{1},\mathbb{E}_{1})$ and $(\Omega_{2},\mathcal{H}_{2},\mathbb{E}_{2})$, respectively. They are called identically distributed, denoted by $X_{1}\overset{d}{=}X_{2}$  if $$\mathbb{E}_{1}[\varphi(X_{1})]=\mathbb{E}_{2}[\varphi(X_{2})],  \quad  \forall\varphi \in C_{b,lip}(R^{n}).$$
$\bar{X}$ is said to be an independent copy of  $X$, if $\bar{X}$ is identically distributed with $X$ and independent of $X$.
\end{definition}
\begin{definition}($G$-L\'{e}vy process).
Let $X=(X_{t})_{t\geq0}$ be a $d-$dimensional c\`{a}dl\`{a}g process on a sublinear expectation space $(\Omega,\mathcal{H},\mathbb{E})$. We say that $X$ is a L\'{e}vy process if :

(i) $X_{0}=0,$

(ii) for each $s,t\geq 0$ the increment $X_{t+s}-X_{s}$ is independent of $(X_{t_{1}},\cdot\cdot\cdot, X_{t_{n}})$  for every $n\in N$ and every partition $0\leq t_{1}\leq t_{2}\leq\cdot\cdot\cdot\leq t_{n}\leq s,$

(iii) the distribution of the increment $X_{t+s}-X_{s}$,  $s, t\geq0$ is stationary, i.e. does not depend on $s.$

Moreover, we say that a L\'{e}vy process $X$ is a $G$-L\'{e}vy process,  if it satisfies additionally following conditions

(iv) there is a $2d$-dimensional L\'{e}vy process $(X_{t}^{c},X_{t}^{d})_{t\geq0}$ such for each  $t\geq 0 $,  $X_{t}=X_{t}^{c}+X_{t}^{d}$,

(v) process $X_{t}^{c}$ and $X_{t}^{d}$ satisfy the following conditions
\begin{align}
& \lim_{t\downarrow0}\mathbb{E}[|X_{t}^{c}|^{3}]t^{-1}=0; \hskip0.3cm \mathbb{E}[|X_{t}^{d}|]< Ct \hskip0.1cm for \hskip0.1cm all \hskip0.1cm t\geq0. \nonumber\end{align}
\end{definition}
\begin{remark}
The conditon (v) implies that $X^{c}$ is a $d$-dimensional generalized $G$-Brownian motion, whereas the jump part $X^{d}$ is of finite variation.(See \cite{M2} for details.)
\end{remark}
 Hu and Peng \cite{M2} noticed in their paper  that each $G$-L\'{e}vy process might be charactereized by a non-local operator $G_{X}.$

\begin{theorem}(\cite{M2})
  Let $X$  be a $G$-L\'{e}vy process in $R^{d}$. For every $f\in  C_{b}^{3}(R^{d})$ such that $f(0)=0$ we put
  $$G_{X}[f(\cdot)]:=\lim_{\delta\downarrow0}\mathbb{E}[f(X_{\delta})]\delta^{-1}.$$
  The above limit exist. Moreover, $G_{X}$ has the following Levy-Khintchine representation
  $$G_{X}[f(\cdot)]=\sup_{(v,p,Q)\in \mathcal{U}}\{\int_{R_{0}^{d}}f(z)v(dz)+\langle Df(0),p\rangle+\frac{1}{2}tr[D^{2}f(0)QQ^{T}]\},$$
  where $R_{0}^{d}:=R^{d}\backslash \{0\}$, $\mathcal{U}$ is a subset $\mathcal{U}\subset \mathcal{V}\times R^{d}\times \mathcal{Q}$ and $\mathcal{V}$ is a set of all Borel measures on $(R_{0}^{d}, \mathcal{B}(R_{0}^{d}))$.  $\mathcal{Q}$ is a set of all $d$-dimension positive definite symmetric matrix in  $\mathbb{S}^{d}$($\mathbb{S}^{d}$ is the space of all $d\times d$-dimensional symmetric matrices) such that
  \begin{align}
& \sup_{(v,p,Q)\in \mathcal{U}}\{\int_{R_{0}^{d}}|z|v(dz)+|p|+tr[QQ^{T}]\}<\infty.\end{align}

 \end{theorem}
 \begin{theorem}(\cite{M2})
 Let $X$ be a $d$-dimensional $G$-L\'{e}vy process. For each $\phi\in C_{b,lip}(R^{d})$, define $u(t,x):=\mathbb{E}[\phi(x+X_{t})].$  Then $u$ is the unique viscosity solution of the following integro-PDE
  \begin{align}
& 0=\partial_{t}u(t,x)-G_{X}[u(t,x+\cdot)-u(t,x)]\nonumber\\&=\partial_{t}u(t,x)-\sup_{(v,p,Q)\in \mathcal{U}}\{\int_{R_{0}^{d}}[u(t,x+z)-u(t,x)]v(dz)+\langle Du(t,x),p\rangle+\frac{1}{2}tr[D^{2}u(t,x)QQ^{T}]\},\end{align}
with initial condition $u(0,x)=\phi(x).$
 \end{theorem}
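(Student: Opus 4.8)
\noindent {\sl Proof sketch.} The plan is to construct the solution directly from the representation $u(t,x)=\mathbb{E}[\phi(x+X_t)]$ and to derive uniqueness from a comparison principle. There are four steps: (i) regularity of $u$; (ii) a dynamic programming (semigroup) identity; (iii) verification of the viscosity sub- and supersolution inequalities via the definition of $G_X$ in Theorem 2.1; and (iv) a comparison principle for the integro-PDE.

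For (i), let $L$ be a Lipschitz constant of $\phi$. Monotonicity and sub-additivity of $\mathbb{E}$ give $\|u(t,\cdot)\|_\infty\le\|\phi\|_\infty$ and $|u(t,x)-u(t,y)|\le\mathbb{E}[|\phi(x+X_t)-\phi(y+X_t)|]\le L|x-y|$, so $u(t,\cdot)$ is $L$-Lipschitz uniformly in $t$; in particular $u(t,\cdot)\in C_{b,lip}(R^d)$. Writing $X_s=X_s^c+X_s^d$ and using condition (v) of the definition of $G$-L\'{e}vy process together with the H\"{o}lder inequality for $\mathbb{E}$, one gets $\mathbb{E}[|X_s|]\le\mathbb{E}[|X_s^c|]+\mathbb{E}[|X_s^d|]\le(\mathbb{E}[|X_s^c|^3])^{1/3}+Cs=:\rho(s)\to0$ as $s\downarrow0$. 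Combined with step (ii) this yields $|u(t',x)-u(t,x)|\le L\rho(|t'-t|)$, so $u$ is bounded and uniformly continuous on $[0,T]\times R^d$.

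For (ii), fix $0\le\delta\le t$. Since $X_t-X_\delta$ is independent of $X_\delta$ and is distributed as $X_{t-\delta}$ (properties (ii)--(iii) of the definition), the independence property of the sublinear expectation applied to $\varphi(\xi,\eta)=\phi(x+\xi+\eta)$ gives $u(t,x)=\mathbb{E}[\mathbb{E}[\phi(x+\xi+X_{t-\delta})]|_{\xi=X_\delta}]=\mathbb{E}[u(t-\delta,x+X_\delta)]$, i.e. $u(t+\delta,x)=\mathbb{E}[u(t,x+X_\delta)]$. For (iii), let $\psi$ be bounded with bounded derivatives, $C^1$ in $t$ and $C^3$ in $x$, with $\psi\ge u$ everywhere and $\psi(t_0,x_0)=u(t_0,x_0)$ at an interior point. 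From the identity and $\psi\ge u$, for small $\delta>0$,
\begin{align*}
0\le\mathbb{E}[\psi(t_0-\delta,x_0+X_\delta)]-\psi(t_0,x_0)\le\mathbb{E}[\psi(t_0-\delta,x_0+X_\delta)-\psi(t_0,x_0+X_\delta)]+\mathbb{E}[f(X_\delta)],
\end{align*}
where $f(\cdot)=\psi(t_0,x_0+\cdot)-\psi(t_0,x_0)\in C_b^3(R^d)$ and $f(0)=0$. A first-order Taylor expansion in $t$ shows the first term on the right equals $-\delta\,\partial_t\psi(t_0,x_0)+o(\delta)$ (the residual coming from the $x$-dependence of $\partial_t\psi$ being controlled by $\delta\rho(\delta)=o(\delta)$), while Theorem 2.1 gives $\mathbb{E}[f(X_\delta)]=\delta\,G_X[f]+o(\delta)$. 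Dividing by $\delta$ and letting $\delta\downarrow0$ yields
\begin{align*}
\partial_t\psi(t_0,x_0)-G_X\big[\psi(t_0,x_0+\cdot)-\psi(t_0,x_0)\big]\le0,
\end{align*}
so $u$ is a viscosity subsolution; the supersolution inequality follows symmetrically with $\psi\le u$, and $u(0,\cdot)=\phi$ holds since $X_0=0$.

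The main obstacle is step (iv). Uniqueness amounts to a comparison principle: every bounded upper-semicontinuous subsolution is dominated by every bounded lower-semicontinuous supersolution with the same initial data. I would prove this by the doubling-of-variables method of viscosity-solution theory adapted to nonlocal operators: penalize by $|x-y|^2/(2\varepsilon)$, apply the nonlocal Jensen--Ishii lemma to extract sub/superjets, and split the integral term $\int_{R_0^d}[\cdots]\,v(dz)$ into a small-jump part estimated through the second-order jet (using $\sup_{\mathcal U}\mathrm{tr}[QQ^T]<\infty$) and a large-jump part estimated directly (using the uniform first-moment bound $\sup_{\mathcal U}\int_{R_0^d}|z|\,v(dz)<\infty$ from $(2.3)$). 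Since all the estimates are uniform over $(v,p,Q)\in\mathcal U$, they are preserved under the supremum defining $G_X$; applying the comparison principle to the $u$ built in (i)--(iii) against any other bounded continuous solution then gives uniqueness.
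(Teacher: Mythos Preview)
The paper does not give its own proof of this statement; Theorem~2.2 is quoted from Hu--Peng \cite{M2} as a preliminary result, with no argument supplied. There is therefore nothing in the present paper to compare your sketch against. That said, your outline is the standard route (and essentially the one in \cite{M2} and in Peng's $G$-Brownian theory): Lipschitz regularity of $u$ from that of $\phi$ together with the moment bounds in condition~(v); the semigroup identity from stationary independent increments; the viscosity inequalities obtained by dividing $\mathbb{E}[\psi(t_0,x_0+X_\delta)]-\psi(t_0,x_0)$ by $\delta$ and appealing to the very definition of $G_X$ in Theorem~2.1; and uniqueness from a comparison principle.

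Two small points. In step~(iii) the sub-additivity split $\mathbb{E}[A+B]\le\mathbb{E}[A]+\mathbb{E}[B]$ works for the subsolution inequality, but the supersolution case is not literally ``symmetric'' because $\hat{\mathbb{E}}$ is only sublinear: you need the companion bound $\mathbb{E}[A+B]\ge\mathbb{E}[B]-\mathbb{E}[-A]$ and should say so explicitly. In step~(iv) the uniform integrability you invoke is condition~(2.1), not~(2.3); and since the nonlocal Jensen--Ishii argument is the only genuinely nontrivial part of the whole proof, a reference (to \cite{M2} or to the Barles--Imbert framework for integro-PDEs) would be appropriate in place of the sketch. With those adjustments your argument is correct.
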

  \begin{theorem}
Let $\mathcal{U}$ satisfy (2.1). Consider the canonical space $\Omega:=\mathbb{D}_{0}(R^{+},R^{d})$ of all c\`{a}dl\`{a}g functions taking values in $R^{d}$ equipped with the Skorohod topology. Then there exists a sublinear expectations $\hat{\mathbb{E}}$ on $\mathbb{D}_{0}(R^{+},R^{d})$ such that the canonical process $(X_{t})_{t\geq0}$ is a $G$-L\'{e}vy process satisfying Levy-Khintchine representation with the same set $\mathcal{U}$.
  \end{theorem}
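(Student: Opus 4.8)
The plan is to build $\hat{\mathbb{E}}$ out of a nonlinear Markov semigroup generated by $G_X$, to extend the resulting consistent family of finite-dimensional sublinear expectations to the path space by a Daniell--Kolmogorov type argument, and then to read off the defining properties of a $G$-L\'{e}vy process directly from the construction. The hypothesis (2.1) on $\mathcal{U}$ is what makes each step work: it makes the nonlocal part of $G_X$ act on $C_{b,lip}$, it yields the moment bounds needed for path regularity, and it keeps the characteristics of the process bounded.

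First I would, for each $\phi\in C_{b,lip}(R^d)$, let $(P_t\phi)(x)$ be the value at time $t$ of the unique bounded viscosity solution of the integro-differential equation of Theorem 2.2 with initial datum $\phi$; under (2.1) its well-posedness --- existence, uniqueness, a comparison principle, and control of the Lipschitz seminorm --- follows from the viscosity-solution theory of Hamilton--Jacobi--Bellman integro-PDEs. From the comparison principle one reads off that $P_t$ is monotone, constant-preserving (the right-hand side vanishes at constants, so $P_tc=c$), sub-additive and positively homogeneous (because $G_X$ is a supremum, hence sublinear, in its argument), that $P_{t+s}=P_t\circ P_s$ by uniqueness, and that $P_t\phi\to\phi$ locally uniformly as $t\downarrow0$. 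Next, for $0=t_0\le t_1<\cdots<t_n$ and $\varphi\in C_{b,lip}((R^d)^n)$ I would set $\hat{\mathbb{E}}[\varphi(X_{t_1},\dots,X_{t_n})]:=\varphi_0$, where $\varphi_n:=\varphi$ and
\begin{equation*}
\varphi_{k-1}(x_1,\dots,x_{k-1}):=\bigl(P_{t_k-t_{k-1}}\bigl[\varphi_k(x_1,\dots,x_{k-1},x_{k-1}+\cdot)\bigr]\bigr)(0),\qquad k=n,\dots,1,
\end{equation*}
with $x_0=0$; this is forced by the requirement that increments be independent, stationary, and governed by $(P_t)$. The semigroup identity $P_{t_k-t}\circ P_{t-t_{k-1}}=P_{t_k-t_{k-1}}$ is exactly the Daniell--Kolmogorov consistency condition under insertion of a time point, and permutation consistency is trivial since the times are ordered, so $\hat{\mathbb{E}}$ is a well-defined sublinear expectation on the cylinder functions over $\Omega=\mathbb{D}_0(R^+,R^d)$, the four defining properties being inherited coordinate by coordinate from $P_t$.

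It then remains to realize this finite-dimensional family on $\mathbb{D}_0(R^+,R^d)$ and to complete $\hat{\mathbb{E}}$ in the norm $\|\xi\|:=\hat{\mathbb{E}}[|\xi|]$. Here I would use the splitting of the L\'{e}vy--Khintchine kernel from Theorem 2.1: since $\sup_{\mathcal{U}}\mathrm{tr}[QQ^T]$ and $\sup_{\mathcal{U}}|p|$ are finite the $(p,Q)$-block yields a generalized $G$-Brownian motion $X^c$ with (H\"{o}lder-)continuous paths (Remark 2.1), while $\sup_{\mathcal{U}}\int_{R_0^d}|z|\,v(dz)<\infty$ forces the $v$-block $X^d$ to have paths of finite variation; a nonlinear Kolmogorov--Chentsov / tightness estimate --- coming from $\hat{\mathbb{E}}[|X_{t+s}-X_t|]\le Cs$ and a bound $\hat{\mathbb{E}}[|X^c_{t+s}-X^c_t|^4]\le Cs^2$, uniform over the representing family --- then gives the c\`{a}dl\`{a}g modification. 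Conditions (i)--(iii) of Definition 2.4 hold by construction; (iv)--(v) follow from the same splitting and these moment bounds (in particular $\hat{\mathbb{E}}[|X_t^d|]\le Ct$, and $\hat{\mathbb{E}}[|X_t^c|^3]$ is of order $t^{3/2}$, hence $o(t)$); and the L\'{e}vy--Khintchine representation holds with the same $\mathcal{U}$ because, by construction, the generator of $X$ is $G_X$, which by Theorem 2.1 is represented by $\mathcal{U}$.

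The hard part is the first step together with the tightness in the third: everything rests on a clean well-posedness-and-comparison theory for a fully nonlinear, possibly degenerate integro-PDE, and on upgrading weak moment estimates to path regularity without a dominating probability measure. An alternative that bypasses the PDE is the upper-expectation representation: put $\hat{\mathbb{E}}[\cdot]=\sup_{P\in\mathcal{P}}E_P[\cdot]$, where $\mathcal{P}$ is the (weakly relatively compact) family of laws on $\mathbb{D}_0(R^+,R^d)$ of semimartingales whose differential characteristics stay in $\mathcal{U}$; then sublinearity is automatic and the $G$-L\'{e}vy property follows from stability of these characteristics under conditioning, the price being the measurable-selection and dynamic-programming arguments needed to show $\mathcal{P}$ is stable under pasting.
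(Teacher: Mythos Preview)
Your proposal is correct and follows the same construction the paper gives: define the one-step operator via the viscosity solution of the integro-PDE (2.2), iterate over increments to define $\hat{\mathbb{E}}$ on cylinder functions, and then complete. The paper in fact does not prove the theorem in full but cites Hu--Peng \cite{M2} (Theorems~38 and~40) and only records the construction of $\hat{\mathbb{E}}$; your outline is a more detailed version of exactly that construction, and the alternative you mention---realizing $\hat{\mathbb{E}}$ as an upper expectation over laws with differential characteristics in $\mathcal{U}$---is the content of the paper's Theorem~2.4.
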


 The proof of above Theorem might be found in (Theorem 38 and 40 in \cite{M2}). We will give however the construction of $\hat{\mathbb{E}}$, as it is important to understand it.  We denote $\Omega_{T}:=\{w_{\cdot\wedge T}:w\in\Omega\}$. Put
  \begin{align}
&Lip(\Omega_{T}):=\{\xi\in L^{0}(\Omega_{T}):\xi=\phi(X_{t_{1}}, X_{t_{2}}-X_{t_{1}},...,X_{t_{n}}-X_{t_{n-1}}),\nonumber\\& \phi\in C_{b,lip}(R^{d\times n }), 0\leq t_{1}\leq\cdot\cdot\cdot\leq t_{n}\leq T\},\nonumber\end{align}
where $X_{t}(w)=w_{t}$ is the canonical process on the space $\mathbb{D}_{0}(R^{+},R^{d})$ and $L^{0}(\Omega)$ is the space of all random variables, which are measurable to the filtration generated by the canonical process. We also set
$$Lip(\Omega):=\bigcup_{T=1}^{\infty}Lip(\Omega_{T}).$$
Firstly, consider the random variable $\xi=\phi(X_{t+s}-X_{s}), \phi\in  C_{b,lip}(R^{d})$. We define
$$\hat{\mathbb{E}}[\xi]:=u(s,0),$$
where $u$ is a unique viscosity solution of integro-PDE (2.2) with the initial condition $u(0,x)=\phi(x).$  For general
$$\xi=\phi(X_{t_{1}},X_{t_{2}}-X_{t_{1}},...,X_{t_{n}}-X_{t_{n-1}}), \phi \in C_{b,Lip}(R^{d\times n})$$
we set $\hat{\mathbb{E}}[\xi]:=\phi_{n}$, where $\phi_{n}$ is obtained via the following iterated procedure
$$\phi_{1}(x_{1},...,x_{n-1})=\hat{\mathbb{E}}[\phi(x_{1},...,x_{n-1},X_{t_{n}}-X_{t_{n-1}})],$$
$$\phi_{2}(x_{1},...,x_{n-2})=\hat{\mathbb{E}}[\phi_{1}(x_{1},...,x_{n-2},X_{t_{n-1}}-X_{t_{n-2}})],$$
$$\vdots$$
$$\phi_{n-1}(x_{1})=\hat{\mathbb{E}}[\phi_{n-1}(x_{1},X_{t_{2}}-X_{t_{1}})],$$
$$\phi_{n}=\hat{\mathbb{E}}[\phi_{n-1}(X_{t_{1}})].$$
Lastly, we extend definition of $\hat{\mathbb{E}}$ on the completion of $Lip(\Omega_{T})$ (respectively $Lip(\Omega)$) under the norm $\|\cdot\|_{p}^{p}=\hat{\mathbb{E}}[|\cdot|^{p}], p\geq1.$  We denote such a completion by $L_{G}^{p}(\Omega_{T})$ (or resp. $L_{G}^{p}(\Omega)$).

Let $\mathcal{G}_{B}$ denote the set of all Borel function $g:R^{d}\rightarrow R^{d}$ such that $g(0)=0.$  Assume that for all L\'{e}vy  measure $\mu$ and $v\in \mathcal{V}$  there exist a function $g_{v}\in \mathcal{G}_{B}$ such that $v(B)=\mu (g_{v}^{-1}(B)),$ $\forall B\in \mathcal{B}(R_{0}^{d})$.  Then we can consider a different parametrizing set in the  L\'{e}vy-Khintchine formula. Namely using
$$\tilde{\mathcal{U}}:=\{(g_{v},p,Q)\in \mathcal{G}_{B}\times R^{d}\times \mathcal{Q}:(v,p,Q)\in \mathcal{U}\}.$$
It is elementary that the equation (2.2) is equivalent to the following equation
  \begin{align}
& 0=\partial_{t}u(t,x)-G_{X}[u(t,x+\cdot)-u(t,x)]\nonumber\\&=\partial_{t}u(t,x)-\sup_{(g,p,Q)\in \tilde{\mathcal{U}}}\{\int_{R_{0}^{d}}[u(t,x+g(z))-u(t,x)]\mu(dz)+\langle Du(t,x),p\rangle+\frac{1}{2}tr[D^{2}u(t,x)QQ^{T}]\}.\end{align}

Let $(\tilde{\Omega},\mathcal{F},P_{0})$  be a probability space carrying a Brownian motion $W$ and a L\'{e}vy process with a L\'{e}vy triplet $(0,0, \mu)$, which is independent of $W$. Let $N(\cdot,\cdot)$ be a Poisson random measure associated  with that L\'{e}vy process. Define $N_{t}=\int_{R_{0}^{d}}xN(t,dx)$, which is finite $P_{0}$-a.s. as we assume that $\mu$ integrates $|x|$. Moreover, in the finite activity case, i.e. $\lambda=\sup_{v\in \mathcal{V}}v(R_{0}^{d})<\infty$,  we define the Poisson process $M$ with intensity $\lambda$ by putting $M_{t}=N(t,R_{0}^{d})$.  We also define the filtration generated by $W$ and $N$:
$$\mathcal{F}_{t}:=\sigma\{W_{s},N_{s}:0\leq s\leq t\}\vee\mathcal{N}; \quad  \mathcal{N}:=\{A\in \tilde{\Omega}:P_{0}(A)=0\};  \quad \mathcal{F}:=(\mathcal{F}_{t})_{t\geq 0}.$$

\begin{theorem}(\cite{K1})
Introduce a set of integrands $\mathcal{A}_{t,T}^{\mathcal{U}}, 0\leq t\leq T$, associated with $\mathcal{U}$  as s set of all processes $\theta =(\theta^{d},\theta^{1,c},\theta^{2,c})$ defined on $]t,T]$ satisfying the following properties:

1.  $(\theta^{1,c}, \theta^{2,c} )$ is $ \mathcal{F}$-adapted process and $\theta^{d}$ is $ \mathcal{F}$-predictable  random field on $]t,T]\times R^{d}$.

2.  For $P_{0}$-a.a. $w\in \tilde{\Omega}$ and a.e. $s\in ]t,T]$ we have that $(\theta^{d}(s,\cdot)(w),\theta^{1,c}_{s}(w),\theta^{2,c}_{s}(w))\in \tilde{\mathcal{U}}$.

3.  $\theta$ satisfies the following integrability condition
$$E^{P_{0}}[\int_{t}^{T}(|\theta^{1,c}_{s}|+|\theta^{2,c}_{s}|^{2}+\int_{R_{0}^{d}}|\theta^{d}(s,z)|\mu(dz))ds]<\infty.$$
 For $\theta\in \mathcal{A}_{0,\infty}^{\mathcal{U}}$ denote the following L\'{e}vy-It\^{o} integral as
 $$B^{t,\theta}_{T}=\int_{t}^{T}\theta^{1,c}_{s} ds+ \int_{t}^{T}\theta^{2,c}_{s} dW_{s}+\int_{t}^{T}\int_{R_{0}^{d}}\theta^{d}(s,z)N(ds,dz).$$
For every $\xi=\phi(X_{t_{1}}, X_{t_{2}}-X_{t_{1}},X_{t_{n}}-X_{t_{n-1}})\in Lip(\Omega_{T})$, then $\hat{\mathbb{E}}[\xi]=\sup_{\theta\in\mathcal{A}_{0,\infty}^{\mathcal{U}}}E^{P_{0}}[\phi(B^{0,\theta}_{t_{1}}, B^{t_{1},\theta}_{t_{2}},\\\ldots , B^{t_{n-1},\theta}_{t_{n}})]$. Let $\xi\in L_{G}^{1}(\Omega)$, we can represent the sublinear expectation in the following way
$$\hat{\mathbb{E}}[\xi]=\sup_{\theta\in\mathcal{A}_{0,\infty}^{\mathcal{U}}}E^{P^{\theta}}[\xi],$$
where $P^{\theta}:=P_{0}\circ (B^{0,\theta}_{\cdot})^{-1}, \theta\in\mathcal{A}_{0,\infty}^{\mathcal{U}}$. We also denote $\mathfrak{B}:=\{P^{\theta}:\theta\in\mathcal{A}_{0,\infty}^{\mathcal{U}}\}$.
 \end{theorem}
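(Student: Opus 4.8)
The plan is to identify the functional $\hat{\mathbb{E}}$ --- defined by Hu and Peng through the iterated viscosity-solution procedure --- with the upper expectation over the family $\mathfrak{B}$, the bridge being a stochastic control representation of the integro-PDE (2.3). First I would settle the one-step case: for $\phi\in C_{b,lip}(R^{d})$ put $v(t,x):=\sup_{\theta\in\mathcal{A}_{0,\infty}^{\mathcal{U}}}E^{P_{0}}[\phi(x+B^{0,\theta}_{t})]$ and show it coincides with $u(t,x):=\hat{\mathbb{E}}[\phi(x+X_{t})]$, which by Theorem 2.2 is the unique bounded viscosity solution of (2.3) with $u(0,\cdot)=\phi$. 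To do so I would establish the dynamic programming principle
\[
v(t,x)=\sup_{\theta\in\mathcal{A}_{0,\infty}^{\mathcal{U}}}E^{P_{0}}\big[v(t-s,\,x+B^{0,\theta}_{s})\big],\qquad 0\le s\le t,
\]
using that $\mathcal{A}^{\mathcal{U}}$ is stable under concatenation of controls at deterministic times, the flow property of the L\'{e}vy-It\^{o} integral $B^{0,\theta}$, and the tower property of $E^{P_{0}}$ along the filtration $\mathcal{F}$. From the DPP, a standard argument deriving the viscosity sub- and super-solution property of $v$ --- via It\^{o}'s formula for L\'{e}vy-It\^{o} processes and Taylor expansion, with the nonlocal jump term carried through the estimates --- shows that $v$ is a bounded, Lipschitz viscosity solution of (2.3), so uniqueness in Theorem 2.2 forces $v=u$.

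Next I would iterate along the partition. For $\xi=\phi(X_{t_{1}},X_{t_{2}}-X_{t_{1}},\dots,X_{t_{n}}-X_{t_{n-1}})$ the value $\hat{\mathbb{E}}[\xi]=\phi_{n}$ is produced by applying the one-step operator $n$ times, and every intermediate $\phi_{k}$ stays bounded and Lipschitz (a supremum of uniformly Lipschitz functions, $\hat{\mathbb{E}}$ being monotone and sublinear), so the one-step representation applies at each stage. Substituting it repeatedly and using the independence and stationarity of the increments of the canonical $G$-L\'{e}vy process, together with the fact that the blocks $B^{t_{k-1},\theta}_{t_{k}}$ come from the restrictions of a single global $\theta\in\mathcal{A}_{0,\infty}^{\mathcal{U}}$ and, conditionally on $\mathcal{F}_{t_{k-1}}$, behave as fresh admissible increments, one collapses the $n$ nested suprema into one:
\[
\hat{\mathbb{E}}[\xi]=\sup_{\theta\in\mathcal{A}_{0,\infty}^{\mathcal{U}}}E^{P_{0}}\big[\phi\big(B^{0,\theta}_{t_{1}},B^{t_{1},\theta}_{t_{2}},\dots,B^{t_{n-1},\theta}_{t_{n}}\big)\big]=\sup_{P\in\mathfrak{B}}E^{P}[\xi].
\]
Here $\hat{\mathbb{E}}[\xi]\ge\sup_{P\in\mathfrak{B}}E^{P}[\xi]$ is the easy half: conditioning a fixed global $\theta$ stepwise on $\mathcal{F}_{t_{n-1}},\mathcal{F}_{t_{n-2}},\dots$ and bounding each conditional expectation by $\phi_{k}$ gives it at once. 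The reverse inequality is the substantial one: for $\varepsilon>0$ one selects an $\varepsilon$-optimal control on each block $(t_{k-1},t_{k}]$ as a function of the already-realised path, checks that the concatenation is $\mathcal{F}$-predictable and admissible, and absorbs the accumulated error --- a measurable-selection argument.

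Finally I would pass from $Lip(\Omega)$ to $L^{1}_{G}(\Omega)$ by density. Both $\xi\mapsto\hat{\mathbb{E}}[\xi]$ and $\xi\mapsto\sup_{P\in\mathfrak{B}}E^{P}[\xi]$ are $1$-Lipschitz for $\|\cdot\|_{1}=\hat{\mathbb{E}}[|\cdot|]$: for the former by sub-additivity and monotonicity, for the latter since $|\sup_{P}E^{P}[\xi]-\sup_{P}E^{P}[\eta]|\le\sup_{P}E^{P}[|\xi-\eta|]=\hat{\mathbb{E}}[|\xi-\eta|]$, the last equality being the cylinder identity just proved applied to $|\xi-\eta|\in Lip(\Omega)$. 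As $Lip(\Omega)$ is dense in $L^{1}_{G}(\Omega)$ by the definition of that completion, the identity extends to every $\xi\in L^{1}_{G}(\Omega)$, with $P^{\theta}=P_{0}\circ(B^{0,\theta}_{\cdot})^{-1}$ and $\mathfrak{B}=\{P^{\theta}:\theta\in\mathcal{A}_{0,\infty}^{\mathcal{U}}\}$. The main obstacle throughout is the measurable-selection/concatenation step that manufactures a single near-optimal admissible control out of blockwise choices; it already enters the dynamic programming principle of Step 1 and reappears, compounded over the $n$ blocks, in the reverse inequality of Step 2, and it is made heavier by the nonlocal jump term, which forces the viscosity comparison and stability estimates to be run in the integro-PDE form of Theorem 2.2 rather than in the purely diffusive $G$-Brownian case.
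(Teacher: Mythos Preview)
The paper does not prove this statement at all: Theorem~2.4 is quoted verbatim from Paczka \cite{K1} as a preliminary result, with no argument given here. So there is no ``paper's own proof'' to compare against.

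That said, your outline is the correct strategy and is essentially what \cite{K1} (and, for the jump-free case, Denis--Hu--Peng \cite{L1}) carries out: identify the viscosity solution $u$ with the value function $v$ of a stochastic control problem via a dynamic programming principle, iterate over the partition blocks using concatenation of controls, and extend by density from $Lip(\Omega)$ to $L^{1}_{G}(\Omega)$. You have correctly flagged the genuine technical burden --- the measurable-selection argument needed to glue $\varepsilon$-optimal blockwise controls into a single admissible $\theta\in\mathcal{A}_{0,\infty}^{\mathcal{U}}$, and the need to run the viscosity comparison for the nonlocal integro-PDE rather than a local one. One small point: in the density step you write $\sup_{P}E^{P}[|\xi-\eta|]=\hat{\mathbb{E}}[|\xi-\eta|]$ ``by the cylinder identity just proved applied to $|\xi-\eta|$''; but $|\xi-\eta|$ need not itself be of cylinder form $\phi(\text{increments})$ with $\phi\in C_{b,lip}$, so you should instead argue that $\sup_{P}E^{P}[|\xi-\eta|]\le\hat{\mathbb{E}}[|\xi-\eta|]$ (which is all you need for Lipschitz continuity of $\sup_{P}E^{P}[\cdot]$) follows from the already-established identity on $Lip(\Omega)$ together with approximation.
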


\begin{definition}
We define the capacity $c$ associated with $\hat{\mathbb{E}}$ by putting
$$c(A):=\sup_{P\in \mathfrak{B}}P(A), \quad A\in \mathcal{B}(\Omega).$$
We will say that a set $A\in \mathcal{B}(\Omega)$ is polar if $c(A)=0$. We say that a property holds quasi-surely (q.s.) if it holds outside a polar set.
 \end{definition}

\begin{lemma}
Let $X\in L_{G}^{1}(\Omega_{T})$  and for some $p>0, \hat{\mathbb{E}}[|X|^{p}]<\infty$. Then, for each $M>0$,
$$c(|X|>M)\leq \frac{\hat{\mathbb{E}}[|X|^{p}]}{M^{p}}.$$
 \end{lemma}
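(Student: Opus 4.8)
The statement to prove is a Markov/Chebyshev-type inequality for the capacity $c$: for $X \in L_G^1(\Omega_T)$ with $\hat{\mathbb{E}}[|X|^p] < \infty$, we have $c(|X|>M) \le \hat{\mathbb{E}}[|X|^p]/M^p$.

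The plan is to exploit the representation of the capacity as a supremum over the family $\mathfrak{B}$ of probability measures, namely $c(A) = \sup_{P\in\mathfrak{B}} P(A)$, together with the dual representation $\hat{\mathbb{E}}[\xi] = \sup_{P\in\mathfrak{B}} E^P[\xi]$ from Theorem 2.5. First I would fix $M>0$ and an arbitrary $P \in \mathfrak{B}$. On the genuine probability space $(\Omega, \mathcal{B}(\Omega), P)$ one has the classical Markov inequality applied to the nonnegative random variable $|X|^p$: indeed $P(|X|>M) = P(|X|^p > M^p) \le E^P[|X|^p]/M^p$. This is just the standard linear-expectation argument: $M^p \mathbf{1}_{\{|X|^p>M^p\}} \le |X|^p$ pointwise, integrate both sides under $P$.

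Next I would take the supremum over $P \in \mathfrak{B}$ on both ends. On the left, $\sup_{P\in\mathfrak{B}} P(|X|>M) = c(|X|>M)$ by Definition 2.3 of the capacity. On the right, $\sup_{P\in\mathfrak{B}} E^P[|X|^p]/M^p = (1/M^p)\sup_{P\in\mathfrak{B}} E^P[|X|^p] = \hat{\mathbb{E}}[|X|^p]/M^p$ by Theorem 2.5, since $1/M^p$ is a positive constant that can be pulled out of the supremum. Chaining these gives $c(|X|>M) \le \hat{\mathbb{E}}[|X|^p]/M^p$, which is exactly the claim.

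The only points needing a little care are measurability and integrability bookkeeping: one must know that $|X|^p$ is a legitimate integrand under each $P \in \mathfrak{B}$ and that the representation $\hat{\mathbb{E}}[|X|^p] = \sup_P E^P[|X|^p]$ genuinely applies. Since $X \in L_G^1(\Omega_T)$ and $\hat{\mathbb{E}}[|X|^p]<\infty$ by hypothesis, $|X|^p$ lies in $L_G^1(\Omega_T)$ (it can be approximated suitably), so Theorem 2.5 is applicable. The set $\{|X|>M\}$ is Borel because $X$ is $\mathcal{B}(\Omega)$-measurable, so $c$ is defined on it. I do not anticipate a real obstacle here — the argument is essentially the observation that the capacity is a sup of probabilities and Markov's inequality holds uniformly over that family, so the only genuine content is the correct invocation of the earlier representation theorems.
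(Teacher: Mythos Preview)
Your argument is correct and is exactly the standard one: apply the classical Markov inequality under each $P\in\mathfrak{B}$ and take the supremum, using the representations $c(A)=\sup_{P\in\mathfrak{B}}P(A)$ and $\hat{\mathbb{E}}[\,\cdot\,]=\sup_{P\in\mathfrak{B}}E^{P}[\,\cdot\,]$. The paper itself states this lemma as a preliminary result without proof (it is imported from the references cited at the start of Section~2), so there is no alternative approach to compare against; your proof is precisely what one would supply.
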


Assume that $G$-L\'{e}vy process $X$ has finite activity, i.e. $$\lambda:=\sup_{v\in \mathcal{V}}v(R_{0}^{d})<\infty.$$  Without loss of generality we will also assume that $\lambda=1$ and that also $\mu(R_{0}^{d})=1$.   Let $X_{u-}$ denote the left limit of $X$ at point $u$, $\triangle X_{u}=X_{u}-X_{u-}$,
then we can define a Poisson random measure $L(ds,dz)$ associated with the $G$-L\'{e}vy process $X$ by putting
$$L(]s,t],A)=\sum_{s<u\leq t}\mathbb{I}_{A}(\Delta X_{u}),  \quad q.s.$$
for any $0<s<t<\infty$ and $A\in \mathcal{B}(R_{0}^{d})$.  The random measure is well-defined  and may be used to define the pathwise integral.

Let $H_{G}^{S}([0,T]\times R_{0}^{d})$ be a space of all elementary random fields on $[0,T]\times R_{0}^{d}$ of the form
$$K(r,z)(w)=\sum_{k=1}^{n-1}\sum_{l=1}^{m}F_{k,l}(w)\mathbb{I}_{]t_{k},t_{k+1}]}(r)\psi_{l}(z),  \quad n,m\in\mathbb{ N},$$
where $0\leq t_{1}<\ldots<t_{n}\leq T$ is the partition of $[0,T]$, $\{\psi_{l}\}_{l=1}^{m}\subset C_{b,lip}(R^{d})$ are functions with disjoint supports s.t. $\psi_{l}(0)=0$ and $F_{k,l}=\phi_{k,l}(X_{t_{1}},\ldots,X_{t_{k}}-X_{t_{k-1}})$, $\phi_{k,l}\in C_{b,lip}(R^{d\times k})$. We introduce the norm on this space
$$\|K\|^{p}_{H_{G}^{p}([0,T] \times R_{0}^{d})}:=\hat{\mathbb{E}} [\int_{0}^{T}\sup_{v\in\mathcal{V}}\int _{R_{0}^{d}}|K(r,z)|^{p}v(dz)dr],\quad p=1,2.$$

\begin{definition}
Let $0\leq s<t\leq T.$ The It\^{o} integral of $K\in H_{G}^{S}([0,T]\times R_{0}^{d})$ w.r.t. jump measure $L$ is defined as
$$\int_{s}^{t}\int_{R_{0}^{d}}K(r,z)L(dr,dz):=\sum_{s<r\leq t}K(r,\triangle X_{r}), \quad q.s.$$
 \end{definition}
\begin{lemma}
For every $K\in H_{G}^{S}([0,T]\times R_{0}^{d})$, we have that $\int_{0}^{T}\int_{R_{0}^{d}}K(r,z)L(dr,dz)$ is an element of $L_{G}^{2}(\Omega_{T}).$
\end{lemma}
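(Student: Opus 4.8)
The plan is to treat a general elementary $K$ by linearity, reduce to a single scalar building block, and then exhibit that block as an $L_G^2$-limit of Lipschitz cylinder functionals obtained by (a) smoothing away the jumps of size $\le\varepsilon$ and (b) sampling the path on a deterministic partition that is then refined. Throughout, I would transport estimates to the classical space $(\tilde\Omega,\mathcal F,P_0)$ via the representation $\hat{\mathbb E}[\cdot]=\sup_{\theta\in\mathcal A_{0,\infty}^{\mathcal U}}E^{P^\theta}[\cdot]$ of Theorem 2.5, under which the canonical process has the law of $B^{0,\theta}=X^{c,\theta}+X^{d,\theta}$ with $X^{c,\theta}$ continuous, $X^{d,\theta}$ piecewise constant with at most $M_b-M_a$ jumps on a window $]a,b]$ (finite activity, $\lambda=1$), and with $|\theta^{1,c}|,|\theta^{2,c}|$ bounded by constants depending only on $\mathcal U$ in view of (2.1).

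\emph{Reduction.} Writing $K(r,z)=\sum_{k,l}F_{k,l}\,\mathbb I_{]t_k,t_{k+1}]}(r)\,\psi_l(z)$, Definition 2.8 and bilinearity give $\int_0^T\int_{R_0^d}K(r,z)L(dr,dz)=\sum_{k,l}F_{k,l}\,J_{k,l}$ with $J_{k,l}:=\sum_{t_k<r\le t_{k+1}}\psi_l(\Delta X_r)$. Since $C_{b,lip}$ is stable under products, $Lip(\Omega_T)$ is an algebra and $L_G^2(\Omega_T)$ is stable under multiplication by bounded elements of $Lip(\Omega_T)$ (if $\xi_m\to\xi$ in $L_G^2$ with $\xi_m\in Lip$ and $F\in Lip$ is bounded, then $F\xi_m\in Lip$ and $\hat{\mathbb E}[|F\xi_m-F\xi|^2]\le\|F\|_\infty^2\,\hat{\mathbb E}[|\xi_m-\xi|^2]\to0$, and $L_G^2$ is complete). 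As each $F_{k,l}$ is a bounded element of $Lip(\Omega_T)$, it suffices to show that for fixed $0\le a<b\le T$ and $\psi\in C_{b,lip}(R^d)$ with $\psi(0)=0$, the functional $J:=\sum_{a<r\le b}\psi(\Delta X_r)$ belongs to $L_G^2(\Omega_b)\subset L_G^2(\Omega_T)$.

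\emph{Approximants and the two limits.} Fix $\varepsilon>0$, choose $\rho_\varepsilon\in C_{b,lip}(R^d)$ with $0\le\rho_\varepsilon\le1$, $\rho_\varepsilon\equiv0$ on $\{|z|\le\varepsilon/2\}$, $\rho_\varepsilon\equiv1$ on $\{|z|\ge\varepsilon\}$, and put $f_\varepsilon:=\psi\rho_\varepsilon\in C_{b,lip}(R^d)$, which vanishes near $0$; with $s_i:=a+i(b-a)/n$ set
$$J_n^\varepsilon:=\sum_{i=0}^{n-1}f_\varepsilon(X_{s_{i+1}}-X_{s_i})\in Lip(\Omega_b),\qquad J^\varepsilon:=\sum_{a<r\le b}f_\varepsilon(\Delta X_r).$$
I would prove (i) $\hat{\mathbb E}[|J-J^\varepsilon|^2]\to0$ as $\varepsilon\downarrow0$ and (ii) for each fixed $\varepsilon$, $\hat{\mathbb E}[|J_n^\varepsilon-J^\varepsilon|^2]\to0$ as $n\to\infty$; a diagonal choice $\varepsilon_k\downarrow0$, $n_k\uparrow\infty$ then presents $J$ as an $L_G^2$-limit of elements of $Lip(\Omega_b)$, whence $J\in L_G^2(\Omega_b)$ by completeness. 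Step (i) is easy: $1-\rho_\varepsilon$ is supported in $\{|z|<\varepsilon\}$ and $|\psi(z)|\le L_\psi|z|$, so $|J-J^\varepsilon|\le L_\psi\varepsilon\,\mathcal N$ pathwise with $\mathcal N:=\#\{r\in]a,b]:\Delta X_r\neq0\}$, and $\hat{\mathbb E}[\mathcal N^p]\le E^{P_0}[(M_b-M_a)^p]<\infty$ for every $p$ (under each $P^\theta$ the jump times of the canonical path lie among those of $N$), giving $\hat{\mathbb E}[|J-J^\varepsilon|^2]\le L_\psi^2\varepsilon^2\hat{\mathbb E}[\mathcal N^2]\to0$. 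For step (ii), introduce $\Gamma_n:=\{\text{each }]s_i,s_{i+1}]\text{ carries at most one jump of }X\}\cap\{\omega_n<\varepsilon/4\}$, where $\omega_n$ is the oscillation at scale $(b-a)/n$ of the continuous part $X^c$ of $X$ on $[a,b]$ (quasi surely well defined, $X$ having finitely many jumps). On $\Gamma_n$ a jump-free box contributes $0$ to both sums (its increment is $<\varepsilon/2$, where $f_\varepsilon=0$) while a one-jump box reproduces the jump value up to an error $\le L_{f_\varepsilon}\omega_n$, so $|J_n^\varepsilon-J^\varepsilon|\le L_{f_\varepsilon}\omega_n\mathcal N$ there. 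Splitting $\hat{\mathbb E}[|J_n^\varepsilon-J^\varepsilon|^2]$ over $\Gamma_n$ and $\Gamma_n^c$: the $\Gamma_n$-part is $\le L_{f_\varepsilon}^2\hat{\mathbb E}[\omega_n^4]^{1/2}\hat{\mathbb E}[\mathcal N^4]^{1/2}$ with $\hat{\mathbb E}[\omega_n^4]=O(1/n)$ uniformly in $\theta$ (Doob's $L^4$ inequality and $E^{P_0}[|X^{c,\theta}_t-X^{c,\theta}_s|^4]\le C|t-s|^2$, a consequence of (2.1)); the $\Gamma_n^c$-part is $\le(\hat{\mathbb E}[|J_n^\varepsilon-J^\varepsilon|^4])^{1/2}c(\Gamma_n^c)^{1/2}$ by the Cauchy--Schwarz inequality for $\hat{\mathbb E}$, and $c(\Gamma_n^c)\to0$ since the probability that some length-$(b-a)/n$ box carries two points of $N$ is $O(1/n)$ uniformly in $\theta$ and $\sup_\theta P_0(\omega_n(X^{c,\theta})\ge\varepsilon/4)=O(1/(n\varepsilon^4))$ by the same moment bound.

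\emph{Main obstacle.} The delicate step is the uniform bound $\sup_n\hat{\mathbb E}[|J_n^\varepsilon-J^\varepsilon|^4]<\infty$: a crude estimate would require a uniform second (or higher) moment of the L\'evy measures $v\in\mathcal V$, which (2.1) does not provide. The way around it uses that $f_\varepsilon$ is bounded and supported in $\{|z|>\varepsilon/2\}$, so $|J_n^\varepsilon|\le\|\psi\|_\infty\cdot\#\{i:|X_{s_{i+1}}-X_{s_i}|>\varepsilon/2\}$, and then splits that count: a box with $|X_{s_{i+1}}-X_{s_i}|>\varepsilon/2$ must have either $|X^c_{s_{i+1}}-X^c_{s_i}|>\varepsilon/4$ (at most $16\varepsilon^{-2}Q_n^c$ of these, where $Q_n^c:=\sum_i|X^c_{s_{i+1}}-X^c_{s_i}|^2$ is the sampled quadratic variation of the continuous part) or $|X^d_{s_{i+1}}-X^d_{s_i}|>\varepsilon/4$ (such a box contains a jump of size $>\varepsilon/8$ or at least two jumps, hence at most $2\mathcal N$ of these). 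Thus $|J_n^\varepsilon|\le\|\psi\|_\infty(16\varepsilon^{-2}Q_n^c+2\mathcal N)$, and $\sup_n\hat{\mathbb E}[(Q_n^c)^4]<\infty$ by the standard moment estimate for the sampled quadratic variation of a continuous It\^o process with bounded characteristics (again (2.1)), while $\hat{\mathbb E}[\mathcal N^4]<\infty$ by finite activity --- so the jump \emph{sizes} never enter, only the jump \emph{count}. Combining the pieces yields $\hat{\mathbb E}[|J_n^\varepsilon-J^\varepsilon|^2]\to0$, finishing (ii) and hence the lemma.
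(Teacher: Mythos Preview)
The paper does not actually prove this lemma: it is stated without proof as a preliminary result imported from Paczka \cite{K1}, where it is part of the construction of the It\^o integral with respect to the jump measure $L$. So there is no proof in the present paper to compare against.

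Your argument is correct and is in spirit the approach of \cite{K1}: reduce by linearity to a single block $J=\sum_{a<r\le b}\psi(\Delta X_r)$, approximate it by the Lipschitz cylinder functionals $J_n^\varepsilon=\sum_i f_\varepsilon(X_{s_{i+1}}-X_{s_i})\in Lip(\Omega_b)$, and transport all estimates to $(\tilde\Omega,P_0)$ via the representation theorem (Theorem~2.4 in this paper's numbering, not 2.5). The finite-activity hypothesis makes the jump count $\mathcal N$ dominated by a fixed Poisson variable $M_b-M_a$ uniformly in $\theta$, and (2.1) gives uniform-in-$\theta$ moment bounds for the continuous It\^o part $X^{c,\theta}$; these are exactly the two inputs you use. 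Two minor points worth tightening: (a) the claimed rate $\hat{\mathbb E}[\omega_n^4]=O(1/n)$ is slightly optimistic because of the logarithmic factor in the Brownian modulus of continuity, but all you need is $\sup_\theta E^{P_0}[\omega_n(X^{c,\theta})^4]\to 0$, which follows from the uniform Kolmogorov bound $E^{P_0}[|X^{c,\theta}_t-X^{c,\theta}_s|^4]\le C|t-s|^2$ by a standard chaining argument; (b) throughout you are really using the upper expectation $\bar{\mathbb E}[\cdot]=\sup_{P\in\mathfrak B}E^P[\cdot]$ rather than $\hat{\mathbb E}$, since $\mathcal N$, $\omega_n$, $\mathbb 1_{\Gamma_n^c}$ are not a priori in any $L_G^p$. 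This is harmless: $\bar{\mathbb E}=\hat{\mathbb E}$ on $Lip(\Omega_T)$, and your chain of inequalities shows that the sequence $(J_{n_k}^{\varepsilon_k})\subset Lip(\Omega_T)$ is Cauchy for $\|\cdot\|_{L_G^2}$, hence converges in $L_G^2(\Omega_T)$ to a limit that coincides q.s.\ with $J$.
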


Let $H_{G}^{p}([0,T]\times R_{0}^{d})$ denote the topological completion of $H_{G}^{S}([0,T]\times R_{0}^{d})$ under the norm $\|\cdot\|_{H_{G}^{p}([0,T]\times R_{0}^{d})},   p=1,2$.
Then It\^{o} integral can be continuously extended to the whole space $H_{G}^{p}([0,T]\times R_{0}^{d}), p=1,2$.   Moreover,  the extended  integral takes value in $L_{G}^{p}(\Omega_{T}), p=1,2$.

We now give the following BDG-type inequality for the integral defined above.
\begin{lemma}
For  $K(r,z)\in H_{G}^{2}([0,T]\times R_{0}^{d})$,  set $Y_{t}:=\int_{0}^{t}\int_{R_{0}^{d}}K(r,z)L(dr,dz)$.  Then there exists a c\`{a}dl\`{a}g modification  $\tilde{Y}_{t}$  of  $Y_{t}$ for all $t\in [0,T]$  such that
$$\hat{\mathbb{E}}[\sup_{0\leq t\leq T}|\tilde{Y_{t}}|^{2}]\leq C_{T}\hat{\mathbb{E}}[\int_{0}^{T}\sup_{v\in \mathcal{V}}\int_{R_{0}^{d}}K^{2}(r,z)v(dz)dr],$$
where $C_{T}>0$ is a constant depend on $T$.
\end{lemma}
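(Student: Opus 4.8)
The plan is to establish the inequality first on the dense subspace $H_G^S([0,T]\times R_0^d)$ of elementary random fields and then pass to the limit. For an elementary field $K$ the integral $Y_t=\int_0^t\int_{R_0^d}K(r,z)L(dr,dz)=\sum_{s<r\le t}K(r,\Delta X_r)$ is, by its very definition as a pathwise sum over the jumps of the càdlàg $G$-L\'evy process, already càdlàg in $t$; so for the elementary case we may take $\tilde Y_t=Y_t$. The key analytic tool is the representation of the sublinear expectation from Theorem 2.4: $\hat{\mathbb{E}}[\xi]=\sup_{\theta\in\mathcal{A}^{\mathcal{U}}_{0,\infty}}E^{P^\theta}[\xi]$, which lets us transfer the problem to a fixed probability space $(\tilde\Omega,\mathcal{F},P_0)$ where $X$ is realized through a genuine L\'evy–It\^o integral driven by a classical Poisson random measure $N(ds,dz)$ with compensator $\mu(dz)\,ds$ (here using the finite-activity assumption $\lambda=\mu(R_0^d)=1$).

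The main steps are as follows. First I would fix $\theta\in\mathcal{A}^{\mathcal{U}}_{0,\infty}$ and observe that under $P_0$ the process $Y_t$ becomes a classical stochastic integral $\int_0^t\int_{R_0^d}\tilde K(r,z)N(dr,dz)$ for an appropriate predictable integrand $\tilde K$ (obtained by composing $K$ with the jump map $g_v$ encoded in $\theta^d$). Splitting $N=\tilde N+\mu\,ds$ into its compensated part plus the compensator, write $Y_t = \int_0^t\int_{R_0^d}\tilde K\,\tilde N(dr,dz) + \int_0^t\int_{R_0^d}\tilde K\,\mu(dz)\,dr$. To the martingale term apply the classical Burkholder–Davis–Gundy inequality for purely discontinuous $P_0$-martingales, bounding $E^{P_0}[\sup_{t\le T}|\cdot|^2]$ by a constant times $E^{P_0}[\int_0^T\int_{R_0^d}|\tilde K(r,z)|^2\mu(dz)\,dr]$. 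To the drift term apply Cauchy–Schwarz in $(r,z)$ together with the finite-activity normalization $\mu(R_0^d)=1$ and $T<\infty$, which again produces $T\cdot E^{P_0}[\int_0^T\int_{R_0^d}|\tilde K|^2\mu(dz)\,dr]$ up to a constant. Since $(\theta^d(r,\cdot),\theta^{1,c}_r,\theta^{2,c}_r)\in\tilde{\mathcal{U}}$ a.e., the measure $\theta^d(r,\cdot)_*\mu$ is one of the $v\in\mathcal V$, so $E^{P_0}[\int_0^T\int_{R_0^d}|\tilde K|^2\mu(dz)\,dr]\le E^{P_0}[\int_0^T\sup_{v\in\mathcal V}\int_{R_0^d}K^2(r,z)v(dz)\,dr]$. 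Taking the supremum over $\theta$ on both sides and using $\hat{\mathbb{E}}[\cdot]=\sup_\theta E^{P^\theta}[\cdot]$ yields the desired bound $\hat{\mathbb{E}}[\sup_{t\le T}|Y_t|^2]\le C_T\,\|K\|^2_{H_G^2([0,T]\times R_0^d)}$ for elementary $K$, with $C_T$ depending only on $T$ through the universal BDG constant and the factor $T$.

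Finally I would remove the elementary-field restriction. Given $K\in H_G^2([0,T]\times R_0^d)$ choose $K_n\in H_G^S$ with $\|K_n-K\|_{H_G^2}\to 0$; the elementary estimate applied to $K_n-K_m$ shows that $\sup_{t\le T}|Y^{(n)}_t-Y^{(m)}_t|$ is Cauchy in $L_G^2(\Omega_T)$, hence (after passing to a subsequence converging q.s., using Lemma 2.1 / Chebyshev) the càdlàg processes $Y^{(n)}$ converge uniformly on $[0,T]$, q.s., to a limit process $\tilde Y$ which is therefore càdlàg and is a modification of $Y$; letting $n\to\infty$ in the inequality for $K_n$ and using $\sup|Y^{(n)}_t|\to\sup|\tilde Y_t|$ together with continuity of $\hat{\mathbb{E}}$ on $L_G^2$ gives the stated inequality for $\tilde Y$.

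I expect the main obstacle to be the careful handling of the drift (compensator) part under the sublinear expectation: unlike in the $G$-Brownian case, the $G$-L\'evy integral carries a genuine finite-variation jump contribution, and one must verify that the same $\theta$ that represents $\hat{\mathbb{E}}$ controls simultaneously the quadratic-variation bound for the martingale part and the $L^1$–$L^2$ bound for the drift part, so that a single constant $C_T$ emerges after taking $\sup_\theta$; the finite-activity normalization $\lambda=\mu(R_0^d)=1$ is exactly what makes this uniform control possible, and keeping track of it is the delicate point. A secondary technical point is the q.s. construction of the càdlàg modification $\tilde Y$, which requires the capacity estimate of Lemma 2.1 to upgrade $L_G^2$-convergence to q.s. uniform convergence along a subsequence.
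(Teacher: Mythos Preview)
Your proposal is correct and follows essentially the same route as the paper: reduce to elementary fields, use the representation of $\hat{\mathbb{E}}$ as a supremum over $\theta$ (Theorem~2.4) to pass to a classical Poisson integral under $P_0$, split $N=\tilde N+\mu\,dr$ and apply the standard BDG inequality to the compensated part together with H\"older/Cauchy--Schwarz on the drift, then use that $\theta^d(r,\cdot)_*\mu\in\mathcal V$ to recover the $\sup_{v\in\mathcal V}$ bound; finally approximate a general $K$ by elementary ones and extract a q.s.\ uniformly convergent subsequence to build the c\`adl\`ag modification. The only cosmetic difference is that the paper constructs $\tilde Y$ explicitly as the telescoping series $Y^{n_1}+\sum_k(Y^{n_{k+1}}-Y^{n_k})$ (using $\hat{\mathbb{E}}\bigl[\sum_k\sup_t|Y^{n_{k+1}}_t-Y^{n_k}_t|\bigr]^2<\infty$ to get q.s.\ absolute convergence) rather than invoking Chebyshev/Borel--Cantelli, but this is the same idea.
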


\begin{proof}
Firstly, let us consider the case:
$$K(r,z)(w)=\sum_{k=1}^{n-1}\sum_{l=1}^{m}F_{k,l}(w)\mathbb{I}_{]t_{k},t_{k+1}]}(r)\psi_{l}(z)\in H_{G}^{S}([0,T]\times R_{0}^{d}).$$
For this case, the proof is similar to the theorem 27 in \cite{K1}. However, for completeness, we prove it as follows.
By the definition of the It\^{o}  integral  and Theorem 2.4 we have
 \begin{align}
&\hat{\mathbb{E}}[\sup_{0\leq t\leq T}(\int_{0}^{t}\int_{R_{0}^{d}}K(r,z)L(dr,dz))^{2}]=\sup _{\theta\in \mathcal{A}_{0,T}^{\mathcal{U}}}E^{P^{\theta}}[\sup_{0\leq t\leq T}(\sum_{0\leq r\leq t}K(r,\triangle X_{r}))^{2}]\nonumber\\
&=\sup _{\theta\in \mathcal{A}_{0,T}^{\mathcal{U}}}E^{P^{\theta}}[\sup_{0\leq t\leq T}(\sum_{0\leq r\leq t}\sum_{k=1}^{n-1}\sum_{l=1}^{m}\phi_{k,l}(X_{t_{1}\wedge t},\ldots,X_{t_{k}\wedge t}-X_{t_{k-1}\wedge t})\mathbb{I}_{]t_{k}\wedge t,t_{k+1}\wedge t]}(r)\psi_{l}(\triangle X_{r}))^{2}]
\nonumber\\
&=\sup _{\theta\in \mathcal{A}_{0,T}^{\mathcal{U}}}E^{P_{0}}[\sup_{0\leq t\leq T}(\sum_{0\leq r\leq t}\sum_{k=1}^{n-1}\sum_{l=1}^{m}\phi_{k,l}(B_{t_{1}\wedge t}^{0,\theta},\ldots,B_{t_{k}\wedge t}^{t_{k-1}\wedge t,\theta})\mathbb{I}_{]t_{k}\wedge t,t_{k+1}\wedge t]}(r)\psi_{l}(\triangle B_{r}^{0,\theta}))^{2}]
\nonumber\\
&=\sup _{\theta\in \mathcal{A}_{0,T}^{\mathcal{U}}}E^{P_{0}}[\sup_{0\leq t\leq T}(\sum_{0\leq r\leq t}\sum_{k=1}^{n-1}\sum_{l=1}^{m}F_{k,l}^{\theta}\mathbb{I}_{]t_{k}\wedge t,t_{k+1}\wedge t]}(r)\psi_{l}(\theta^{d}(r,\triangle N_{r})))^{2}],
\end{align}
where $F_{k,l}^{\theta}:=\phi_{k,l}(B_{t_{1}\wedge t}^{0,\theta},\ldots,B_{t_{k}\wedge t}^{t_{k-1}\wedge t,\theta})$  and $N_{t}$ is the Poisson process in Theorem 2.4.  Define a predictable process $K^{\theta}(r,z)$ as
$$K^{\theta}(r,z):=\sum_{k=1}^{n-1}\sum_{l=1}^{m}F_{k,l}^{\theta}\mathbb{I}_{]t_{k}\wedge t,t_{k+1}\wedge t]}(r)\psi_{l}(\theta^{d}(r,z)).$$
Then we can rewrite (2.4) as
 \begin{align}
&\hat{\mathbb{E}}[\sup_{0\leq t\leq T}(\int_{0}^{t}\int_{R_{0}^{d}}K(r,z)L(dr,dz))^{2}]=\sup _{\theta\in \mathcal{A}_{0,T}^{\mathcal{U}}}E^{P_{0}}[\sup_{0\leq t\leq T}(\int_{0}^{t}\int_{R_{0}^{d}}K^{\theta}(r,z)N(dr,dz))^{2}]\nonumber\\
&=\sup _{\theta\in \mathcal{A}_{0,T}^{\mathcal{U}}}E^{P_{0}}[\sup_{0\leq t\leq T}(\int_{0}^{t}\int_{R_{0}^{d}}K^{\theta}(r,z)\tilde{N}(dr,dz)+\int_{0}^{t}\int_{R_{0}^{d}}K^{\theta}(r,z)\mu(dz)dr)^{2}],
\end{align}
where $N(dr,dz)$ and $\tilde{N}(dr,dz)$ are respectively the Poisson random measure and the compensated Poisson measure associated with the L\'{e}vy process with the L\'{e}vy triplet $(0,0,\mu)$. Using the standard BDG inequality and H\"{o}lder inequality we get:
 \begin{align}
&\hat{\mathbb{E}}[\sup_{0\leq t\leq T}(\int_{0}^{t}\int_{R_{0}^{d}}K(r,z)L(dr,dz))^{2}]\nonumber\\
&\leq2\sup _{\theta\in \mathcal{A}_{0,T}^{\mathcal{U}}}E^{P_{0}}\{\sup_{0\leq t\leq T}[(\int_{0}^{t}\int_{R_{0}^{d}}K^{\theta}(r,z)\tilde{N}(dr,dz))^{2}+(\int_{0}^{t}\int_{R_{0}^{d}}K^{\theta}(r,z)\mu(dz)dr)^{2}]\}
\nonumber\\
&\leq2\sup _{\theta\in \mathcal{A}_{0,T}^{\mathcal{U}}}E^{P_{0}}\{\sup_{0\leq t\leq T}[(\int_{0}^{t}\int_{R_{0}^{d}}K^{\theta}(r,z)\tilde{N}(dr,dz))^{2}+t\int_{0}^{t}\int_{R_{0}^{d}}(K^{\theta}(r,z))^{2}\mu(dz)dr]\}
\nonumber\\
&\leq2\sup _{\theta\in \mathcal{A}_{0,T}^{\mathcal{U}}}\{E^{P_{0}}\sup_{0\leq t\leq T}(\int_{0}^{t}\int_{R_{0}^{d}}K^{\theta}(r,z)\tilde{N}(dr,dz))^{2}+E^{P_{0}}T\int_{0}^{T}\int_{R_{0}^{d}}(K^{\theta}(r,z))^{2}\mu(dz)dr\}
\nonumber\\
&\leq2\sup _{\theta\in \mathcal{A}_{0,T}^{\mathcal{U}}}\{\tilde{C}_{T}E^{P_{0}}\int_{0}^{T}\int_{R_{0}^{d}}(K^{\theta}(r,z))^{2}\mu(dz)dr+E^{P_{0}}T\int_{0}^{T}\int_{R_{0}^{d}}(K^{\theta}(r,z))^{2}\mu(dz)dr\}
\nonumber\\
&=C_{T}\sup _{\theta\in \mathcal{A}_{0,T}^{\mathcal{U}}}\int_{0}^{T}\int_{R_{0}^{d}}E^{P_{0}}(\sum_{k=1}^{n-1}\sum_{l=1}^{m}F_{k,l}^{\theta}\mathbb{I}_{]t_{k},t_{k+1}]}(r)\psi_{l}(\theta^{d}(r,z)))^{2}\mu(dz)dr,
\end{align}
where $\tilde{C}_{T}$ is a constant depend on $T$ and $C_{T}=2(T+\tilde{C}_{T})$. Note that the intervals $]t_{k},t_{k+1}]$ are mutually disjoint, hence
\begin{align}
&\hat{\mathbb{E}}[\sup_{0\leq t\leq T}(\int_{0}^{t}\int_{R_{0}^{d}}K(r,z)L(dr,dz))^{2}]
\nonumber\\
&\leq C_{T}\sup _{\theta\in \mathcal{A}_{0,T}^{\mathcal{U}}}\sum_{k=1}^{n-1}\sum_{l=1}^{m}\int_{t_{k}}^{t_{k+1}}E^{P_{0}}[(F_{k,l}^{\theta})^{2}\int_{R_{0}^{d}}\psi_{l}^{2}(\theta^{d}(r,z))\mu(dz)]dr
\nonumber\\
&= C_{T}\sup _{\theta\in \mathcal{A}_{0,T}^{\mathcal{U}}}\sum_{k=1}^{n-1}\int_{t_{k}}^{t_{k+1}}E^{P_{0}}[\sum_{l=1}^{m}\phi_{k,l}^{2}(B_{t_{1}}^{0,\theta},\ldots,B_{t_{k}}^{t_{k-1},\theta})\int_{R_{0}^{d}}\psi_{l}^{2}(\theta^{d}(r,z))\mu(dz)]dr
.\end{align}

By the assumptions on the process $\theta^{d}$, we know that for a.a. $w$ and a.e. $r$ function $z\rightarrow \theta^{d}(r,z)(w)$ is equal to $g_{v}$ for  $v\in \mathcal{V}$.  Hence we can transform  (2.7) to get
\begin{align}
&\hat{\mathbb{E}}[\sup_{0\leq t\leq T}(\int_{0}^{t}\int_{R_{0}^{d}}K(r,z)L(dr,dz))^{2}]
\nonumber\\
&\leq C_{T}\sup _{\theta\in \mathcal{A}_{0,T}^{\mathcal{U}}}\sum_{k=1}^{n-1}\int_{t_{k}}^{t_{k+1}}E^{P_{0}}[\sum_{l=1}^{m}\phi_{k,l}^{2}(B_{t_{1}}^{0,\theta},\ldots,B_{t_{k}}^{t_{k-1},\theta})\int_{R_{0}^{d}}\psi_{l}^{2}(z)v(dz)]dr
\nonumber\\
&\leq C_{T}\sup _{\theta\in \mathcal{A}_{0,T}^{\mathcal{U}}}E^{P_{0}}[\sum_{k=1}^{n-1}\int_{t_{k}}^{t_{k+1}}\sup_{v\in \mathcal{V}}\sum_{l=1}^{m}\phi_{k,l}^{2}(B_{t_{1}}^{0,\theta},\ldots,B_{t_{k}}^{t_{k-1},\theta})\int_{R_{0}^{d}}\psi_{l}^{2}(z)v(dz)dr]
\nonumber\\
&\leq C_{T}\sup _{\theta\in \mathcal{A}_{0,T}^{\mathcal{U}}}E^{P_{0}}[\int_{0}^{T}\sup_{v\in \mathcal{V}}\int_{R_{0}^{d}}\sum_{k=1}^{n-1}\sum_{l=1}^{m}\phi_{k,l}^{2}(B_{t_{1}}^{0,\theta},\ldots,B_{t_{k}}^{t_{k-1},\theta})\mathbb{I}_{]t_{k},t_{k+1}]}(r)\psi_{l}^{2}(z)v(dz)dr]
\nonumber\\
&= C_{T}\hat{\mathbb{E}}[\int_{0}^{T}\sup_{v\in \mathcal{V}}\int_{R_{0}^{d}}K^{2}(r,z)v(dz)dr].
\end{align}

For general $K(r,z)\in H_{G}^{2}([0,T]\times R_{0}^{d})$, choose $\{K^{n},n\geq1\} \subset H_{G}^{S}([0,T]\times R_{0}^{d})$ such that
$$\|K-K^{n}\|_{H_{G}^{2}([0,T] \times R_{0}^{d})}\rightarrow 0     \quad as \quad n\rightarrow\infty.$$
Set $Y_{t}^{n}=\int_{0}^{t}\int_{R_{0}^{d}}K^{n}(r,z)L(dr,dz)$. Then as $n,m\rightarrow\infty$,
$$\hat{\mathbb{E}}[\sup_{0\leq t\leq T}(Y_{t}^{n}-Y_{t}^{m})^{2}]\leq C_{T}\|K^{n}-K^{m}\|^{2}_{H_{G}^{2}([0,T] \times R_{0}^{d})}\rightarrow 0$$
and so there exist a subsequence $\{Y_{t}^{n_{k}},k\geq 1\}$ such that for any $k\geq1$,
$$(\hat{\mathbb{E}}[\sup_{0\leq t\leq T}(Y_{t}^{n_{k+1}}-Y_{t}^{n_{k}})^{2}])^{\frac{1}{2}}\leq \frac{1}{2^{k}}.$$
Then
\begin{align}
&(\hat{\mathbb{E}}[\sum_{k=1}^{\infty}\sup_{0\leq t\leq T}(Y_{t}^{n_{k+1}}-Y_{t}^{n_{k}})]^{2})^{\frac{1}{2}}=\sup _{\theta\in \mathcal{A}_{0,T}^{\mathcal{U}}}(E^{P^{\theta}}(\sum_{k=1}^{\infty}\sup_{0\leq t\leq T}(Y_{t}^{n_{k+1}}-Y_{t}^{n_{k}}))^{2})^{\frac{1}{2}}\nonumber\\
&\leq \sup _{\theta\in \mathcal{A}_{0,T}^{\mathcal{U}}}\sum_{k=1}^{\infty}(E^{P^{\theta}}(\sup_{0\leq t\leq T}(Y_{t}^{n_{k+1}}-Y_{t}^{n_{k}}))^{2})^{\frac{1}{2}}\leq \sum_{k=1}^{\infty}(\hat{\mathbb{E}}[\sup_{0\leq t\leq T}(Y_{t}^{n_{k+1}}-Y_{t}^{n_{k}})^{2}])^{\frac{1}{2}}\nonumber\\
&\leq 1,
\end{align}
which implies
$$\sum_{k=1}^{\infty}\sup_{0\leq t\leq T}|Y_{t}^{n_{k+1}}-Y_{t}^{n_{k}}|< \infty,    \hskip0.1cm  \hskip0.1cm q.s.$$

Set $\tilde{Y}_{t}=Y_{t}^{n_{1}}+\sum_{k=1}^{\infty}(Y_{t}^{n_{k+1}}-Y_{t}^{n_{k}})$, then $\tilde{Y}_{t}$ is q.s. defined on $\Omega$ for all $t\in [0,T]$ and for q.s. $w$, $t\rightarrow \tilde{Y}_{t}(w)$ is c\`{a}dl\`{a}g. Moreover, $(\hat{\mathbb{E}}[\sup_{0\leq t\leq T}\tilde{Y}_{t}^{2}])^{\frac{1}{2}}<\infty$, and
\begin{align}
&(\hat{\mathbb{E}}[\sup_{0\leq t\leq T}|Y_{t}^{n_{k}}-\tilde{Y}_{t}|^{2}])^{\frac{1}{2}}\leq(\hat{\mathbb{E}}(\sum_{l=k}^{\infty}\sup_{0\leq t\leq T}|Y_{t}^{n_{l+1}}-Y_{t}^{n_{l}}|)^{2})^{\frac{1}{2}}\nonumber\\
&\leq\sup _{\theta\in \mathcal{A}_{0,T}^{\mathcal{U}}}(E^{P^{\theta}}(\sum_{l=k}^{\infty}\sup_{0\leq t\leq T}|Y_{t}^{n_{l+1}}-Y_{t}^{n_{l}}|)^{2})^{\frac{1}{2}}\nonumber\\
&\leq \sum_{l=k}^{\infty}(\hat{\mathbb{E}}\sup_{0\leq t\leq T}|Y_{t}^{n_{l+1}}-Y_{t}^{n_{l}}|^{2})^{\frac{1}{2}}\rightarrow 0    \hskip0.1cm  \hskip0.1cm \hskip0.1cm  \hskip0.1cm  as \hskip0.1cm  \hskip0.1cm k\rightarrow\infty.
\end{align}

On the other hand, by $\|K-K^{n_{k}}\|_{H_{G}^{2}([0,T] \times R_{0}^{d})}\rightarrow 0$, we have
\begin{align}
&|(\hat{\mathbb{E}}[\int_{0}^{T}\sup_{v\in \mathcal{V}}\int_{R_{0}^{d}}K^{2}(r,z)v(dz)dr])^{\frac{1}{2}}-(\hat{\mathbb{E}}[\int_{0}^{T}\sup_{v\in \mathcal{V}}\int_{R_{0}^{d}}|K^{n_{k}}(r,z)|^{2}v(dz)dr])^{\frac{1}{2}}|\nonumber\\
&\leq\{\hat{\mathbb{E}}[(\int_{0}^{T}\sup_{v\in \mathcal{V}}\int_{R_{0}^{d}}K^{2}(r,z)v(dz)dr)^{\frac{1}{2}}-(\int_{0}^{T}\sup_{v\in \mathcal{V}}\int_{R_{0}^{d}}|K^{n_{k}}(r,z)|^{2}v(dz)dr)^{\frac{1}{2}}]^{2}\}^{\frac{1}{2}}\nonumber\\
& \leq(\hat{\mathbb{E}}[\int_{0}^{T}\sup_{v\in \mathcal{V}}\int_{R_{0}^{d}}|K(r,z)-K^{n_{k}}(r,z)|^{2}v(dz)dr])^{\frac{1}{2}}\rightarrow 0
\end{align}
as $k\rightarrow\infty.$  Then  combining (2.8)-(2.11), we have
\begin{align}
&\hat{\mathbb{E}}[\sup_{0\leq t\leq T}|\tilde{Y}_{t}|^{2}]\leq \hat{\mathbb{E}}[\sup_{0\leq t\leq T}|\tilde{Y}_{t}-Y_{t}^{n_{k}}|^{2}]+\hat{\mathbb{E}}[\sup_{0\leq t\leq T}|Y_{t}^{n_{k}}|^{2}]\nonumber\\
&\leq C_{T}\hat{\mathbb{E}}[\int_{0}^{T}\sup_{v\in \mathcal{V}}\int_{R_{0}^{d}}K^{2}(r,z)v(dz)dr].
\end{align}

Finally, since for any $t\in [0,T]$,  $\hat{\mathbb{E}}[|Y_{t}^{n_{k}}-Y_{t}|^{2}]\rightarrow 0$, we have $\hat{\mathbb{E}}[|Y_{t}-\tilde{Y}_{t}|^{2}]= 0$, thus $\tilde{Y}$ is a c\`{a}dl\`{a}g modification of  $Y$.
\end{proof}

Next, we introduce the It\^{o} integral of $G$-Brownian motion.
We consider the following type of simple process: for a given partition $\pi_{T}=t_{0},t_{1},...,t_{N}$ of $[0,T],$  set
$$\eta_{t}(w)=\sum_{k=0}^{N-1}\xi_{k}(w)I_{[t_{k},t_{k+1})}(t),$$
where $\xi_{k} \in L_{G}^{p}(\Omega_{t_{k}}),k=0,1,...,N-1$ are given.  The collection of these processes is denoted by $M_{G}^{p,0}(0,T)$. Denote by $M_{G}^{p}(0,T)$ the completion of  $M_{G}^{p,0}(0,T)$ under the norm $$\|\eta\|_{M_{G}^{p}(0,T)}=[\int_{0}^{T}\hat{\mathbb{E}}[|\eta_{t}|^{p}]dt]^{\frac{1}{p}}.$$
For a process $\eta\in {M_{G}^{p}(0,T)}$ $(p\geq2)$ one can define the stochastic integral w.r.t.  $G$-Brownian motion $B_{t}$ denoted by $\int_{0}^{t}\eta_{s} d B_{s}$. Similarly for $\eta\in M_{G}^{p}(0,T)$， $(p\geq1)$ one can define integrals $\int_{0}^{t}\eta_{s}ds$ and $\int_{0}^{t}\eta_{s}d\langle B\rangle_{s}$ respectively, where $\langle B\rangle_{t}$ is the quadratic variation process of $G$-Brownian motion $B_{t}$.  Moreover,  all of these integrals belong to $L_{G}^{p}(\Omega)$ for $p\geq1$(See \cite{S6} for details).

The following two lemmas from \cite{F1} are the BDG-type inequalities for the $G$-stochastic integral with respect to $B_{t}$ and $\langle B\rangle_{t}$.
\begin{lemma}
For $p\geq 2$, $\eta\in M_{G}^{p}(0,T)$.  Then
$$\hat{\mathbb{E}}[\sup_{0\leq u\leq T}|\int_{0}^{u}\eta_{r}dB_{r}|^{p}]\leq C_{p}T^{\frac{p}{2}-1}\int_{0}^{T}\hat{\mathbb{E}}|\eta_{r}|^{p}dr,$$
where $C_{p}>0$ is a constant only dependent on $p$.
\end{lemma}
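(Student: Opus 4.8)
The plan is to imitate the argument used for Lemma 2.3: first prove the estimate for simple integrands by reducing, through the representation of $\hat{\mathbb{E}}$ supplied by Theorem 2.4, to the classical Burkholder--Davis--Gundy (BDG) inequality, and then pass to a general $\eta\in M_G^p(0,T)$ by density together with a quasi-sure subsequence argument.

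First I would take a simple process $\eta_t=\sum_{k=0}^{N-1}\xi_k\,\mathbb{I}_{[t_k,t_{k+1})}(t)\in M_G^{p,0}(0,T)$; after a routine truncation one may assume each $\xi_k$ is bounded, so that $\sup_{0\le u\le T}|\int_0^u\eta_r\,dB_r|^p\in L_G^1(\Omega_T)$ and Theorem 2.4 applies to it, giving a family $\{P^\theta:\theta\in\mathcal{A}_{0,T}^{\mathcal{U}}\}$ with $\hat{\mathbb{E}}[\,\cdot\,]=\sup_\theta E^{P^\theta}[\,\cdot\,]$. Under each $P^\theta$ the canonical $G$-Brownian motion is represented as a continuous martingale $B_r=\int_0^r\theta^{2,c}_s\,dW_s$ whose diffusion coefficient is bounded uniformly in $\theta$ by condition (2.1), say $|\theta^{2,c}_s|^2\le\bar\sigma^2$. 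Writing $\eta^\theta$ for the simple process obtained from $\eta$ by replacing the canonical process with $B^{0,\theta}$, the $G$-It\^{o} integral of $\eta$ becomes under $P^\theta$ the classical It\^{o} integral $\int_0^u\eta^\theta_r\theta^{2,c}_r\,dW_r$, and hence
\begin{align}
\hat{\mathbb{E}}\Big[\sup_{0\le u\le T}\Big|\int_0^u\eta_r\,dB_r\Big|^p\Big]
&=\sup_{\theta\in\mathcal{A}_{0,T}^{\mathcal{U}}}E^{P_0}\Big[\sup_{0\le u\le T}\Big|\int_0^u\eta^\theta_r\theta^{2,c}_r\,dW_r\Big|^p\Big]\nonumber\\
&\le C_p\sup_{\theta\in\mathcal{A}_{0,T}^{\mathcal{U}}}E^{P_0}\Big[\Big(\int_0^T|\eta^\theta_r|^2|\theta^{2,c}_r|^2\,dr\Big)^{p/2}\Big]\nonumber\\
&\le C_p\,\bar\sigma^{p}\,T^{\frac p2-1}\sup_{\theta\in\mathcal{A}_{0,T}^{\mathcal{U}}}E^{P_0}\Big[\int_0^T|\eta^\theta_r|^p\,dr\Big],\nonumber
\end{align}
where the second step is the classical BDG inequality (applied $P_0$-almost surely to the continuous $P_0$-martingale $u\mapsto\int_0^u\eta^\theta_r\theta^{2,c}_r\,dW_r$) and the third is H\"{o}lder's inequality in the time variable. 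Since $\sup_{\theta}E^{P_0}[\int_0^T|\eta^\theta_r|^p\,dr]=\hat{\mathbb{E}}[\int_0^T|\eta_r|^p\,dr]\le\int_0^T\hat{\mathbb{E}}[|\eta_r|^p]\,dr$ by the sub-additivity of $\hat{\mathbb{E}}$, absorbing $\bar\sigma^p$ into the constant gives the asserted bound for simple $\eta$.

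For a general $\eta\in M_G^p(0,T)$ I would pick simple $\eta^n$ with $\|\eta-\eta^n\|_{M_G^p(0,T)}\to0$ and set $I^n_u=\int_0^u\eta^n_r\,dB_r$. The previous step applied to $\eta^n-\eta^m$ gives $\hat{\mathbb{E}}[\sup_{0\le u\le T}|I^n_u-I^m_u|^p]\le C_pT^{\frac p2-1}\|\eta^n-\eta^m\|^p_{M_G^p(0,T)}\to0$, so a fast-converging subsequence can be extracted and, essentially as in the proof of Lemma 2.3, one builds a process $\tilde I$ that is defined and continuous q.s. on $[0,T]$, coincides with $\int_0^\cdot\eta_r\,dB_r$ as an element of $L_G^p$, and satisfies $\hat{\mathbb{E}}[\sup_{0\le u\le T}|I^{n_k}_u-\tilde I_u|^p]\to0$. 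Passing to the limit along this subsequence in the inequality already established for $\eta^{n_k}$ --- using Minkowski's inequality for $\hat{\mathbb{E}}$ on the left-hand side and $\int_0^T\hat{\mathbb{E}}[|\eta^{n_k}_r|^p]\,dr\to\int_0^T\hat{\mathbb{E}}[|\eta_r|^p]\,dr$ on the right --- yields the stated estimate.

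The main obstacle I expect is the first equality in the displayed chain: one has to justify that, under each $P^\theta$, the $G$-It\^{o} integral $\int_0^u\eta_r\,dB_r$ coincides with the corresponding classical It\^{o} integral against $W$ with diffusion coefficient $\theta^{2,c}$, and that $\theta^{2,c}$ is bounded uniformly in $\theta$ so that the classical BDG constant does not deteriorate. For simple integrands this reduction comes down to the linearity of the integral together with the fact that, under $P^\theta$, the increments of $B$ over $]t_k,t_{k+1}]$ are genuine martingale increments conditional on $\mathcal{F}_{t_k}$; once it is in place, the BDG and H\"{o}lder steps and the density/quasi-sure-limit extension are routine, the latter being the same construction as in the proof of Lemma 2.3.
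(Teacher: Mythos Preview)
The paper does not give a proof of this lemma at all: it is stated as one of ``the following two lemmas from \cite{F1}'' and simply cited from Gao's paper, so there is nothing in the present paper to compare your argument against.

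That said, your proposal is a reasonable reconstruction, and it is in the same spirit as the proof the paper \emph{does} give for its Lemma~2.3 (the jump BDG inequality): use the representation $\hat{\mathbb{E}}[\cdot]=\sup_\theta E^{P^\theta}[\cdot]$ from Theorem~2.4 to reduce the sublinear problem to a family of classical ones, apply the ordinary BDG inequality under each $P^\theta$, and then extend from simple integrands by a density/subsequence argument. This is also essentially how the result is obtained in \cite{F1}, so your route is the expected one. The one point you flag yourself---identifying the $G$-It\^{o} integral under $P^\theta$ with a classical It\^{o} integral against $W$ with diffusion $\theta^{2,c}$, and knowing that $|\theta^{2,c}|^2$ is bounded uniformly by $\sup_{(v,p,Q)\in\mathcal{U}}\mathrm{tr}[QQ^T]$ from (2.1)---is indeed the only non-routine step, and in the pure $G$-Brownian setting of \cite{F1} it is handled by the Denis--Hu--Peng characterization of the relevant measures; here it follows from the construction in Theorem~2.4.
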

\begin{lemma}
For $p\geq 1$, $\eta\in M_{G}^{p}(0,T)$.  Then there exists a constant $C'_{p}>0$ such that
$$\hat{\mathbb{E}}[\sup_{0\leq u\leq T}|\int_{0}^{u}\eta_{r}d\langle B\rangle_{r}|^{p}]\leq C'_{p}T^{p-1}\int_{0}^{T}\hat{\mathbb{E}}|\eta_{r}|^{p}dr.$$

\end{lemma}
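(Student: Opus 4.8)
The plan is to exploit a feature of this integrand that is absent in Lemma~2.5: the integrator $\langle B\rangle_r$ is a process of finite variation, indeed a continuous nondecreasing process with $\langle B\rangle_0=0$ and $\langle B\rangle_t-\langle B\rangle_s\le\bar{\sigma}^2(t-s)$ for all $0\le s\le t$, where $\bar{\sigma}^2:=\hat{\mathbb{E}}[B_1^2]$. Consequently the estimate is essentially \emph{pathwise} and no martingale (BDG) inequality is required; the role played by the classical BDG inequality in the $dB_r$ case is here played by the pathwise Lipschitz bound $0\le d\langle B\rangle_r\le\bar{\sigma}^2\,dr$.

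First I would treat a simple process $\eta_t(w)=\sum_{k=0}^{N-1}\xi_k(w)I_{[t_k,t_{k+1})}(t)\in M_G^{p,0}(0,T)$. For every $u\in[0,T]$,
$$\Big|\int_0^u\eta_r\,d\langle B\rangle_r\Big|\le\int_0^u|\eta_r|\,d\langle B\rangle_r\le\bar{\sigma}^2\int_0^u|\eta_r|\,dr\le\bar{\sigma}^2\int_0^T|\eta_r|\,dr,$$
and the last bound does not depend on $u$. Raising to the $p$-th power, taking the supremum over $u$, and applying H\"older's inequality in the time variable gives
$$\sup_{0\le u\le T}\Big|\int_0^u\eta_r\,d\langle B\rangle_r\Big|^p\le\bar{\sigma}^{2p}\Big(\int_0^T|\eta_r|\,dr\Big)^p\le\bar{\sigma}^{2p}\,T^{p-1}\int_0^T|\eta_r|^p\,dr.$$
Taking $\hat{\mathbb{E}}$ on both sides and using sub-additivity of the sublinear expectation in the form of the Fubini-type inequality $\hat{\mathbb{E}}[\int_0^T|\eta_r|^p\,dr]\le\int_0^T\hat{\mathbb{E}}[|\eta_r|^p]\,dr$ (proved by approximating the time integral by Riemann sums and using sub-additivity) yields the asserted bound with $C'_p=\bar{\sigma}^{2p}$ for simple $\eta$.

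For a general $\eta\in M_G^p(0,T)$ I would choose $\eta^n\in M_G^{p,0}(0,T)$ with $\|\eta-\eta^n\|_{M_G^p(0,T)}\to0$. Applying the inequality just established to the simple process $\eta^n-\eta^m$ shows that $\{\int_0^{\cdot}\eta^n_r\,d\langle B\rangle_r\}_{n\ge1}$ is Cauchy in $L_G^p(\Omega_T)$ for the norm $(\hat{\mathbb{E}}[\sup_{0\le u\le T}|\cdot|^p])^{1/p}$; passing to a suitable subsequence exactly as in the proof of Lemma~2.4 identifies the limit (up to a modification) with $\int_0^{\cdot}\eta_r\,d\langle B\rangle_r$, and the estimate is preserved in the limit. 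If $B$ is multidimensional the same argument applies componentwise to each $\langle B^i\rangle$, and by polarization to each mutual variation $\langle B^i,B^j\rangle$, since these also satisfy $|d\langle B^i,B^j\rangle_r|\le C\,dr$. I do not anticipate a genuine obstacle: the only two points needing care are that the interchange of $\hat{\mathbb{E}}$ with the time integral holds for a sublinear expectation only as an inequality — fortunately in the direction we need — and the density/limit passage, which is identical to the one already carried out above for the jump-measure BDG inequality.
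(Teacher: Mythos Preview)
Your proof is correct and follows the standard route. Note, however, that the paper does not actually prove this lemma: it simply cites Gao~\cite{F1} for both Lemma~2.4 and Lemma~2.5. The argument you outline --- using the pathwise Lipschitz bound $0\le \langle B\rangle_t-\langle B\rangle_s\le \bar\sigma^2(t-s)$, then H\"older in time, then sub-additivity of $\hat{\mathbb E}$ over Riemann sums, then density --- is precisely the approach taken in that reference, so there is nothing to contrast.
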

Let $ M_{G}^{p}([0,T]; R^{n})$ and $H_{G}^{p}([0,T]\times R_{0}^{d}; R^{n})$ be the space of $n$-dimension stochastic process with each element belong to $M_{G}^{p}(0,T)$ and $H_{G}^{p}([0,T]\times R_{0}^{d})$ respectively.

\section{SDEs Driven by $G$-L\'{e}vy Process}  \label{s3}
In this section, we consider the solution of the following $n$-dimension GSDEs:
\begin{equation}
\begin{cases}
dY_{t}=b(t,Y_{t})dt+h_{ij}(t,Y_{t})d\langle B^{i}, B^{j}\rangle_{t}+\sigma_{i}(t,Y_{t}) dB^{i}_{t}+\int_{R_{0}^{d}}K(t,Y_{t},z)L(dt,dz),
\\
Y_{t_{0}}=Y_{0},
\end{cases}
\end{equation}
where $b(\cdot, x), h_{ij}(\cdot, x),\sigma_{i}(\cdot, x)\in M_{G}^{2}([0,T]; R^{n}), K(\cdot, x, \cdot)\in H_{G}^{2}([0,T]\times R_{0}^{d}; R^{n})$ for each $x\in R^{n}$, $Y_{0}\in R^{n}$ is the initial value with $\hat{\mathbb{E}}|Y_{0}|^{2}<\infty$,
$(\langle B^{i}, B^{j}\rangle_{t})_{t\geq t_{0}}$ is the mutual variation process of the $d$-dimension  $G$-Brownian motion $(B_{t})_{t\geq t_{0}}$.

Here and in the rest of this paper we use the Einstein convention, i.e., the above repeated indices of $i$ and $j$ within one term imply the summation form 1 to $d$,  i.e.,
$$\int_{0}^{t}h_{ij}(s,Y_{s})d\langle B^{i}, B^{j}\rangle_{s}:=\sum_{i,j=1}^{d}\int_{0}^{t}h_{ij}(s,Y_{s})d\langle B^{i}, B^{j}\rangle_{s},$$
$$\int_{0}^{t}\sigma_{i}(s,Y_{s}) dB^{i}_{s}:=\sum_{i=1}^{d}\int_{0}^{t}\sigma_{i}(s,Y_{s}) dB^{i}_{s}.$$

\begin{theorem}
Suppose that

(a) there exists a function $H(t,u): R_{+}\times  R_{+}\rightarrow  R_{+}$ such that

(a1) for fixed $t$, $H(t,u)$ is continuous nondecreasing with respect to $u$,

(a2)  for $0\leq t_{0}<t\leq T$ and $X_{t}\in L_{G}^{2}(\Omega_{t})$,
$$b(t,X_{t}),h_{ij}(t,X_{t}),\sigma_{i}(t,X_{t})\in M_{G}^{2}(0,T;R^{n}), K(t,X_{t},z)\in H_{G}^{2}([0,T]\times R_{0}^{d};R^{n})$$
and
\begin{align}
&\hat{\mathbb{E}}[|b(t,X_{t})|^{2}]+\hat{\mathbb{E}}[|h_{ij}(t,X_{t})|^{2}]+\hat{\mathbb{E}}[|\sigma_{i}(t,X_{t})|^{2}]+\hat{\mathbb{E}}[\sup_{v\in \mathcal{V}}\int_{R_{0}^{d}}|K(t,X_{t},z)|^{2}v(dz)]\nonumber\\
&\leq H(t,\hat{\mathbb{E}}[\sup_{r\leq t}|X_{r}|^{2}]),\end{align}

(a3) for any $M>0$, the differential equation
$$\frac{du}{dt}=MH(t,u)$$
has a global solution $u_{t}$ for any initial value $u_{t_{0}}$;

(b) there exist a function $F(t,u): R_{+}\times  R_{+}\rightarrow  R_{+}$ such that

(b1) for fixed $t$, $F(t,u)$ is continuous nondecreasing in $u$ and $F(t,0)=0$,

(b2) for $t_{0}<t\leq T$ and $X_{t}, Y_{t}\in L_{G}^{2}(\Omega_{t})$,
\begin{align}
&\hat{\mathbb{E}}[|b(t,X_{t})-b(t,Y_{t})|^{2}]+\hat{\mathbb{E}}[|h_{ij}(t,X_{t})-h_{ij}(t,Y_{t})|^{2}]+\hat{\mathbb{E}}[|\sigma_{i}(t,X_{t})-\sigma_{i}(t,Y_{t})|^{2}]\nonumber\\
&+\hat{\mathbb{E}}[\sup_{v\in \mathcal{V}}\int_{R_{0}^{d}}|(K(t,X_{t},z)-K(t,Y_{t},z))|^{2}v(dz)]\leq F(t,\hat{\mathbb{E}}[\sup_{r\leq t}|X_{r}-Y_{r}|^{2}]),
\end{align}

(b3) for any constant $M>0$, if a non-negative function $\varphi_{t}$ satisfies
$$\varphi_{t}\leq M\int_{t_{0}}^{t}F(s,\varphi_{s})ds$$
for all $t> t_{0}$, then $\varphi_{t}=0$.

Then (3.1) has a unique  c\`{a}dl\`{a}g  solution  $Y_{t}\in L_{G}^{2}(\Omega_{t})$ for $t_{0}<t\leq T$.
\end{theorem}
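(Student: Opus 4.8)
The plan is to construct the solution by a Picard iteration and to control the iterates by the two comparison mechanisms supplied by hypotheses (a) and (b). Put $Y^0_t\equiv Y_0$ and define recursively
\begin{multline*}
Y^{n+1}_t = Y_0 + \int_{t_0}^t b(s,Y^n_s)\,ds + \int_{t_0}^t h_{ij}(s,Y^n_s)\,d\langle B^i,B^j\rangle_s \\
+ \int_{t_0}^t \sigma_i(s,Y^n_s)\,dB^i_s + \int_{t_0}^t\!\!\int_{R_0^d} K(s,Y^n_s,z)\,L(ds,dz).
\end{multline*}
First I would check by induction that each $Y^n$ is a well-defined element of $L_G^2(\Omega_t)$ admitting a c\`{a}dl\`{a}g modification: assuming $Y^n\in L_G^2(\Omega_t)$ with $\hat{\mathbb{E}}[\sup_{r\le t}|Y^n_r|^2]<\infty$, hypothesis (a2) ensures the four coefficients evaluated along $Y^n$ lie in the correct spaces, the continuous stochastic integrals then land in $L_G^2$ by the $G$-It\^{o} theory recalled in Section 2, the jump integral does so by Lemma 2.2, and Lemmas 2.3, 2.4 and 2.5 furnish the c\`{a}dl\`{a}g version of $Y^{n+1}$.

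The heart of the argument is a uniform a priori estimate. Using $|a_1+\cdots+a_5|^2\le 5\sum_i|a_i|^2$, the BDG-type inequalities of Lemmas 2.3, 2.4 and 2.5, and the growth hypothesis (a2), I would obtain
\[
\hat{\mathbb{E}}\big[\sup_{r\le t}|Y^{n+1}_r|^2\big] \le C\,\hat{\mathbb{E}}[|Y_0|^2] + M\int_{t_0}^t H\!\big(s,\hat{\mathbb{E}}[\sup_{r\le s}|Y^n_r|^2]\big)\,ds ,
\]
with $M$ depending only on $T$ and the bound (2.1). Letting $u_t$ be the global solution furnished by (a3) for this $M$, with initial value $u_{t_0}$ chosen so large that $u_{t_0}\ge C\hat{\mathbb{E}}[|Y_0|^2]$, an induction using the monotonicity of $H(s,\cdot)$ from (a1) yields $\hat{\mathbb{E}}[\sup_{r\le t}|Y^n_r|^2]\le u_t$ for all $n$ and all $t\in[t_0,T]$; in particular the iterates are bounded in $L_G^2$ uniformly in $n$, which also closes the induction of the previous step.

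To pass to the limit, fix $m>n$ and set $g_{n,m}(t):=\hat{\mathbb{E}}[\sup_{r\le t}|Y^m_r-Y^n_r|^2]$. Arguing as above but now invoking the difference hypothesis (b2) gives $g_{n,m}(t)\le M\int_{t_0}^t F\!\big(s,g_{n-1,m-1}(s)\big)\,ds$. Writing $h_n(t):=\sup_{m>n}g_{n,m}(t)$, which is bounded by the a priori estimate, the monotonicity of $F(s,\cdot)$ gives $h_n(t)\le M\int_{t_0}^t F(s,h_{n-1}(s))\,ds$; passing to $\bar h(t):=\limsup_n h_n(t)$ — by reverse Fatou, using the uniform bound as a dominating function and the continuity and monotonicity of $F(s,\cdot)$ to move the $\limsup$ inside $F$ — yields $\bar h(t)\le M\int_{t_0}^t F(s,\bar h(s))\,ds$, whence $\bar h\equiv 0$ by (b3). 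Thus $\{Y^n\}$ is Cauchy for the norm $(\hat{\mathbb{E}}[\sup_{r\le T}|\cdot|^2])^{1/2}$, and extracting a rapidly converging subsequence exactly as in the proof of Lemma 2.3 produces a limit $Y_t\in L_G^2(\Omega_t)$ with c\`{a}dl\`{a}g paths and $\hat{\mathbb{E}}[\sup_{r\le T}|Y^n_r-Y_r|^2]\to 0$. Letting $n\to\infty$ in the Picard recursion and using (b2) once more to control the integrands shows that $Y$ solves (3.1). For uniqueness, if $Y$ and $\tilde Y$ are two c\`{a}dl\`{a}g $L_G^2$ solutions then the same estimate gives $\psi(t):=\hat{\mathbb{E}}[\sup_{r\le t}|Y_r-\tilde Y_r|^2]\le M\int_{t_0}^t F(s,\psi(s))\,ds$, so $\psi\equiv 0$ by (b3).

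The step I expect to be the main obstacle is the a priori estimate: one must apply Lemma 2.3 to the $G$-L\'{e}vy jump term — the only place where the newly established BDG-type inequality is indispensable — and reconcile the quantity $\sup_{v\in\mathcal{V}}\int_{R_0^d}|K|^2v(dz)$ appearing there with the form of the growth bound in (a2), and then run the ODE comparison with $\tfrac{du}{dt}=MH(t,u)$ so that the dominating solution $u_t$ is finite on all of $[t_0,T]$ and independent of $n$. Carrying the c\`{a}dl\`{a}g modifications coherently through the iteration and through the limit is the accompanying technical nuisance.
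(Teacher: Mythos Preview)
Your proposal is correct and follows essentially the same route as the paper: Picard iteration, an inductive a priori bound via the ODE comparison from (a3) combined with Lemmas 2.3--2.5, a Cauchy estimate in the norm $(\hat{\mathbb{E}}[\sup_{r\le T}|\cdot|^2])^{1/2}$ obtained by applying (b2) and then (b3) to the $\limsup$, extraction of a rapidly convergent subsequence to get a c\`adl\`ag limit, and finally uniqueness via the same integral inequality. The only cosmetic difference is that the paper passes directly to $\xi_t:=\limsup_{n,m\to\infty}\hat{\mathbb{E}}[\sup_{r\le t}|Y^{n}_r-Y^{m}_r|^2]$ and invokes Fatou together with (b1), whereas you route through $h_n(t)=\sup_{m>n}g_{n,m}(t)$ before taking the $\limsup$ in $n$; your version is in fact slightly more explicit about why the $\limsup$ can be moved inside $F$.
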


{\bf Proof~~~}
Let $Y_{t}^{0}:=Y_{0}$ and for $n\in \mathbb{N}$,
\begin{align}
&Y_{t}^{n}:=Y_{0}+\int_{t_{0}}^{t}b(s,Y_{s}^{n-1})ds+\int_{t_{0}}^{t}h_{ij}(s,Y_{s}^{n-1})d\langle B^{i},B^{j}\rangle_{s}+\int_{t_{0}}^{t}\sigma_{i}(s,Y_{s}^{n-1})dB^{i}_{s}\nonumber\\
&+\int_{t_{0}}^{t}\int_{R_{0}^{d}}K(s,Y_{s}^{n-1},z)L(ds,dz).
\end{align}
First of all, we show that for $t_{0}<t\leq T$ and $n\in \mathbb{N}$,
\begin{align}
&Y_{t}^{n}\in L_{G}^{2}(\Omega_{t})\hskip0.1cm  \hskip0.1cm and  \hskip0.1cm  \hskip0.1cm \hat{\mathbb{E}}[\sup_{r\leq t}|Y_{r}^{n}|^{2}]\leq u_{t}\leq u_{T},\end{align}
where $u_{t}$ is the solution of differential equation in (a3) satisfies
$$u_{t}=C_{1}(T)\hat{\mathbb{E}}[|Y_{0}|^{2}]+C_{1}(T)\int_{t_{0}}^{t}H(s,u_{s})ds$$
and $$C_{1}(T):=5(1+T+C'_{2}T+C_{2}+C_{T}).$$

Suppose $Y_{t}^{n-1}\in L_{G}^{2}(\Omega_{t})$ and $\hat{\mathbb{E}}[\sup_{r\leq t}|Y_{r}^{n-1}|^{2}]\leq u_{t}$, which together with the definition of the $G$-stochastic integral and (a2) yield $Y_{t}^{n}\in L_{G}^{2}(\Omega_{t})$.

Secondly, by $C_{r}$-inequality,  Lemma 2.3-2.5, H\"{o}lder inequality and (a1)-(a2), we get
\begin{align}
&\hat{\mathbb{E}}[\sup_{r\leq t}|Y_{r}^{n}|^{2}]\leq 5\{\hat{\mathbb{E}}[|Y_{0}|^{2}]+\hat{\mathbb{E}}[\sup_{r\leq t}|\int_{t_{0}}^{r}b(s,Y_{s}^{n-1})ds|^{2}]+\hat{\mathbb{E}}[\sup_{r\leq t}|\int_{t_{0}}^{r}h_{ij}(s,X_{s}^{n-1})d\langle B^{i},B^{j}\rangle_{s}|^{2}]\nonumber\\
&+\hat{\mathbb{E}}[\sup_{r\leq t}|\int_{t_{0}}^{r}\sigma_{i}(s,Y_{s}^{n-1})dB^{i}_{s}|^{2}]+\hat{\mathbb{E}}[\sup_{r\leq t}|\int_{t_{0}}^{r}\int_{R_{0}^{d}}K(s,Y_{s}^{n-1},z)L(ds,dz)|^{2}]\}\nonumber\\
&\leq 5\{\hat{\mathbb{E}}[|Y_{0}|^{2}]+t\int_{t_{0}}^{t}\hat{\mathbb{E}}[|b(s,Y_{s}^{n-1})|^{2}]ds+C'_{2}t\int_{t_{0}}^{t}\hat{\mathbb{E}}[|h_{ij}(s,Y_{s}^{n-1})|^{2}]ds+C_{2}\int_{t_{0}}^{t}\hat{\mathbb{E}}[|\sigma_{i}(s,Y_{s}^{n-1})|^{2}]ds\nonumber\\
&+C_{t}\int_{t_{0}}^{t}\hat{\mathbb{E}}[\sup_{v\in \mathcal{V}}\int_{R_{0}^{d}}|K(s,Y_{s}^{n-1},z)|^{2}v(dz)]ds\}\nonumber\\
&\leq C_{1}(T)(\hat{\mathbb{E}}[|Y_{0}|^{2}]+\int_{t_{0}}^{t}H(s,\hat{\mathbb{E}}[\sup_{r\leq s}|Y_{r}^{n-1}|^{2}])ds)\nonumber\\
&\leq C_{1}(T)(\hat{\mathbb{E}}[|Y_{0}|^{2}]+\int_{t_{0}}^{t}H(s,u_{s})ds)  \leq u_{t}
\end{align}
for all $t\leq T$.  By the induction method, (3.5) is proved.

Next, by the same deduction as above, we have
\begin{align}
&\hat{\mathbb{E}}[\sup_{r\leq t}|Y_{r}^{n}-Y_{r}^{m}|^{2}]\leq 4\{\hat{\mathbb{E}}[\sup_{r\leq t}|\int_{t_{0}}^{r}(b(s,Y_{s}^{n-1})-b(s,Y_{s}^{m-1}))ds|^{2}]\nonumber\\
&+\hat{\mathbb{E}}[\sup_{r\leq t}|\int_{t_{0}}^{r}(h_{ij}(s,Y_{s}^{n-1})-h_{ij}(s,Y_{s}^{m-1}))d\langle B^{i},B^{j}\rangle_{s}|^{2}]+\hat{\mathbb{E}}[\sup_{r\leq t}|\int_{t_{0}}^{r}(\sigma_{i}(s,Y_{s}^{n-1})-\sigma_{i}(s,Y_{s}^{m-1}))dB^{i}_{s}|^{2}]\nonumber\\
&+\hat{\mathbb{E}}[\sup_{r\leq t}|\int_{t_{0}}^{r}\int_{R_{0}^{d}}(K(s,Y_{s}^{n-1},z)-K(s,Y_{s}^{m-1},z))L(ds,dz)|^{2}]\}\nonumber\\
&\leq C_{2}(T)\{\int_{t_{0}}^{t}\hat{\mathbb{E}}[|b(s,Y_{s}^{n-1})-b(s,Y_{s}^{m-1})|^{2}]ds\nonumber\\
&+\int_{t_{0}}^{t}\hat{\mathbb{E}}[|h_{ij}(s,Y_{s}^{n-1})-h_{ij}(s,Y_{s}^{m-1})|^{2}]ds+\int_{t_{0}}^{t}\hat{\mathbb{E}}[|\sigma_{i}(s,Y_{s}^{n-1})-\sigma_{i}(s,Y_{s}^{m-1})|^{2}]ds\nonumber\\
&+\int_{t_{0}}^{t}\hat{\mathbb{E}}[\sup_{v\in \mathcal{V}}\int_{R_{0}^{d}}|K(s,Y_{s}^{n-1},z)-K(s,Y_{s}^{m-1},z)|^{2}v(dz)]ds\}\nonumber\\
&\leq C_{2}(T)\int_{t_{0}}^{t}F(s,\hat{\mathbb{E}}[\sup_{r\leq s}|Y_{r}^{n-1}-Y_{r}^{m-1}|^{2}])ds,
\end{align}
where $C_{2}(T)=4(T+C_{2}'T+C_{2}+C_{T})$.

Let $$\xi_{t}=\limsup_{n,m\rightarrow\infty}\hat{\mathbb{E}}[\sup_{r\leq t}|Y_{r}^{n-1}-Y_{r}^{m-1}|^{2}]$$
It follows from the Fatou lemma and (b1) that
$$\xi_{t}\leq C_{2}(T)\int_{t_{0}}^{t}F(s,\xi_{s})ds.$$
By (b3), we obtain that $\xi_{t}=0$, i.e.
$$\limsup_{n,m\rightarrow\infty}\hat{\mathbb{E}}[\sup_{r\leq t}|Y_{r}^{n-1}-Y_{r}^{m-1}|^{2}]=0.$$
Then there exists a subsequence $Y_{t}^{n_{k}}$ such that for any $k\geq1$,
$$(\hat{\mathbb{E}}[\sup_{r\leq t}|Y_{r}^{n_{k+1}}-Y_{r}^{n_{k}}|^{2}])^{\frac{1}{2}}\leq \frac{1}{2^{k}}.$$
Thus
\begin{align}
&(\hat{\mathbb{E}}[\sum_{k=1}^{\infty}\sup_{r\leq t}|Y_{r}^{n_{k+1}}-Y_{r}^{n_{k}}|]^{2})^{\frac{1}{2}}=\sup _{\theta\in \mathcal{A}_{0,T}^{\mathcal{U}}}(E^{P^{\theta}}(\sum_{k=1}^{\infty}\sup_{r\leq t}|Y_{r}^{n_{k+1}}-Y_{r}^{n_{k}}|)^{2})^{\frac{1}{2}}\nonumber\\
&\leq \sup _{\theta\in \mathcal{A}_{0,T}^{\mathcal{U}}}\sum_{k=1}^{\infty}(E^{P^{\theta}}(\sup_{r\leq t}|Y_{r}^{n_{k+1}}-Y_{r}^{n_{k}}|)^{2})^{\frac{1}{2}}\leq \sum_{k=1}^{\infty}(\hat{\mathbb{E}}[\sup_{r\leq t}|Y_{r}^{n_{k+1}}-Y_{r}^{n_{k}}|^{2}])^{\frac{1}{2}}\nonumber\\
&\leq 1,
\end{align}
which implies
$$\sum_{k=1}^{\infty}\sup_{r\leq t}|Y_{r}^{n_{k+1}}-Y_{r}^{n_{k}}|< \infty    \hskip0.1cm  \hskip0.1cm q.s.$$
Set $Y_{t}=Y_{t}^{n_{1}}+\sum_{k=1}^{\infty}(Y_{t}^{n_{k+1}}-Y_{t}^{n_{k}})$, then $Y_{t}$ is q.s. defined on $\Omega$ for all $t\in [0,T]$ and c\`{a}dl\`{a}g. Moreover, $(\hat{\mathbb{E}}[\sup_{r\leq t}|Y_{r}|^{2}])^{\frac{1}{2}}<\infty$, and
\begin{align}
&(\hat{\mathbb{E}}[\sup_{r\leq t}|Y_{r}^{n_{k}}-Y_{r}|^{2}])^{\frac{1}{2}}\leq(\hat{\mathbb{E}}(\sum_{l=k}^{\infty}\sup_{r\leq t}|Y_{r}^{n_{l+1}}-Y_{r}^{n_{l}}|)^{2})^{\frac{1}{2}}\nonumber\\
&=\sup _{\theta\in \mathcal{A}_{0,T}^{\mathcal{U}}}(E^{P^{\theta}}(\sum_{l=k}^{\infty}\sup_{r\leq t}|Y_{r}^{n_{l+1}}-Y_{r}^{n_{l}}|)^{2})^{\frac{1}{2}}\leq \sum_{l=k}^{\infty}(\hat{\mathbb{E}}\sup_{r\leq t}|Y_{r}^{n_{l+1}}-Y_{r}^{n_{l}}|^{2})^{\frac{1}{2}}.
\end{align}
Letting $k\rightarrow \infty$ and taking limits on both sides of the above inequality, we get
$$\lim_{k\rightarrow\infty}\hat{\mathbb{E}}[\sup_{r\leq t}|Y_{r}^{n_{k}}-Y_{r}|^{2}]=0.$$
Then by the H\"{o}lder inequality, (b2) and Lemma 2.3-2.5, it holds that
\begin{align}
&\hat{\mathbb{E}}[\sup_{r\leq t}|\int_{t_{0}}^{r}b(s,Y_{s}^{n_{k}})ds-\int_{t_{0}}^{r}b(s,Y_{s})ds|^{2}]\leq C_{2}(T)\int_{t_{0}}^{t}F(s,\hat{\mathbb{E}}[\sup_{r\leq s}|Y_{r}^{n_{k}}-Y_{r}|^{2})ds,\nonumber
\end{align}
\begin{align}
&\hat{\mathbb{E}}[\sup_{r\leq t}|\int_{t_{0}}^{r}h_{ij}(s,Y_{s}^{n_{k}})d\langle B^{i},B^{j}\rangle_{s}-\int_{t_{0}}^{r}b(s,Y_{s})d\langle B^{i},B^{j}\rangle_{s}|^{2}]\leq C_{2}(T)\int_{t_{0}}^{t}F(s,\hat{\mathbb{E}}[\sup_{r\leq s}|Y_{r}^{n_{k}}-Y_{r}|^{2})ds,\nonumber
\end{align}
\begin{align}
&\hat{\mathbb{E}}[\sup_{r\leq t}|\int_{t_{0}}^{r}\sigma_{i}(s,Y_{s}^{n_{k}})dB^{i}_{s}-\int_{t_{0}}^{r}\sigma_{i}(s,Y_{s})dB^{i}_{s}|^{2}]\leq C_{2}(T)\int_{t_{0}}^{t}F(s,\hat{\mathbb{E}}[\sup_{r\leq s}|Y_{r}^{n_{k}}-Y_{r}|^{2})ds,\nonumber
\end{align}
and
\begin{align}
&\hat{\mathbb{E}}[\sup_{r\leq t}|\int_{t_{0}}^{r}\int_{R_{0}^{d}}K(s,Y_{s}^{n_{k}},z)L(ds,dz)-\int_{t_{0}}^{r}\int_{R_{0}^{d}}K(s,Y_{s},z)L(ds,dz)|^{2}]\nonumber\\
&\leq C_{2}(T)\int_{t_{0}}^{t}F(s,\hat{\mathbb{E}}[\sup_{r\leq s}|Y_{r}^{n_{k}}-Y_{r}|^{2})ds.\nonumber
\end{align}
Taking limits on both sides of (3.4) in $L_{G}^{2}(\Omega_{t})$, we obtain that $Y$ satisfies (3.1).

Next, let $Y$ and $Y'$ be both solutions of (1.1), then by the same way as above, we obtain that
$$\hat{\mathbb{E}}[\sup_{r\leq t}|Y_{r}-Y'_{r}|^{2}]\leq C_{2}(T)\int_{t_{0}}^{t}F(s,\hat{\mathbb{E}}[\sup_{r\leq s}|Y_{r}-Y'_{r}|^{2}])ds$$
for all $t\leq T$. We can apply (b3) deduce that $\hat{\mathbb{E}}[\sup_{r\leq t}|Y_{r}-Y'_{r}|^{2}]=0$, which implies that $Y_{t}=Y'_{t}, \hskip0.1cm  \hskip0.1cm  t_{0}<t \leq T$ q.s..  Thus the proof is completed.

\section{Exponential stability of the solutions }  \label{s3}
 In this section, we consider exponential stability of the following $n$-dimension GSDEs:
\begin{equation}
\begin{cases}
dY_{t}=b(t,Y_{t})dt+h_{ij}(t,Y_{t})d\langle B^{i}, B^{j}\rangle_{t}+\sigma_{i}(t,Y_{t}) dB^{i}_{t}+\int_{R_{0}^{d}}K(t,Y_{t},z)L(dt,dz),
\\
Y_{t_{0}}=Y_{0}
\end{cases}
\end{equation}
where $b, h_{ij},\sigma_{i}\in M_{G}^{2}([0,T]; R^{n}), K\in H_{G}^{2}([0,T]\times R_{0}^{d}; R^{n})$, $Y_{0}\in R^{n}$ is the initial value with $\hat{\mathbb{E}}|Y_{0}|^{2}<\infty$,
$(\langle B^{i}, B^{j}\rangle_{t})_{t\geq t_{0}}$ is the mutual variation process of the $d$-dimension  $G$-Brownian motion $(B_{t})_{t\geq t_{0}}$.  We assume the functions $b, h_{ij},\sigma_{j}$ and $K$ satisfy all necessary conditions for the global existence and uniqueness of solutions for all $t\geq t_{0}$. For the purpose of stability in this paper, we also assume that $b(t,0)=0, h_{ij}(t,0)=0, \sigma_{i}(t,0)=0, K(t, 0, z)=0$.  Thus, the system (4.1) has a trivial solution.

\begin{definition}
The trivial solution of the system (4.1) is said to be

(1) mean square exponential stable if for any initial $Y_{0}$, the solution $Y_{t}$ satisfies that
$$\hat{\mathbb{E}}|Y_{t}|^{2}\leq C\hat{\mathbb{E}}|Y_{0}|^{2}e^{-\lambda(t-t_{0})},$$
where $\lambda$ and $C$ are positive constants independent of $t_{0}$.

(2) quasi sure exponentially stable if  the solution $Y_{t}$ satisfies that
$$\limsup_{t\rightarrow\infty}\frac{1}{t}\ln |Y_{t}|\leq -\lambda,  \quad q.s.,$$
for any initial data $Y_{0}$ and $\lambda> 0.$
 \end{definition}
\begin{definition}
The function $V$ is said to belong to the class $v_{0}$, if $V(t,Y)\in C^{1,2}([t_{0},+\infty)\times R^{n},R^{+})$, i.e., $V_{t},V_{Y},V_{YY}$ are continuous on $[t_{0},+\infty)\times R^{n}$, and $V_{YY}$ satisfy local Lipschitz condition, where
$$V_{t}(t,Y):=\frac{\partial V(t,Y)}{\partial t},    \quad \quad V_{Y}(t,Y):=(\frac{\partial V(t,Y)}{\partial Y_{1}}, \frac{\partial V(t,Y)}{\partial Y_{2}},\ldots, \frac{\partial V(t,Y)}{\partial Y_{n}})$$
and
$$V_{YY}(t,Y):=(\frac{\partial ^{2}V(t,Y)}{\partial Y_{i}\partial Y_{j}})_{n\times n}.$$
 \end{definition}
\begin{definition}
For each $V\in v_{0}$, we define an operator $L$ by
\begin{align}
& LV(t, Y_{t}):=V_{t}(t,Y_{t})+\langle V_{Y}(t,Y_{t}),b(t,Y_{t})\rangle\nonumber\\
&+\sup_{Q\in \mathcal{Q}}tr[(\langle V_{Y}(t,Y_{t}),h(t,Y_{t})\rangle+\frac{1}{2}\langle V_{YY}(t,Y_{t})\sigma(t,Y_{t}),\sigma(t,Y_{t})\rangle)QQ^{T}]\nonumber\\
&+\sup_{v\in \mathcal{V}}\int_{R_{0}^{d}}(V(t,Y_{t^{-}}+K(t,Y_{t},z))-V(t,Y_{t^{-}}))v(dz),
\end{align}
where $\langle V_{Y}(t,Y_{t}),h(t,Y_{t})\rangle+\langle V_{YY}(t,Y)\sigma(t,Y_{t}),\sigma(t,Y_{t})\rangle$ is the symmetric matrix in $\mathbb{S}^{d}$,  with the form
 \begin{align}
& \langle V_{Y}(t,Y_{t}),h(t,Y_{t})\rangle+\langle V_{YY}(t,Y_{t})\sigma(t,Y_{t}),\sigma(t,Y_{t})\rangle\nonumber\\
&:=[\langle V_{Y}(t,Y_{t}),h_{ij}(t,Y_{t})\rangle+\langle V_{YY}(t,Y_{t})\sigma_{i}(t,Y_{t}),\sigma_{j}(t,Y_{t})\rangle]_{i,j=1}^{d}.\nonumber
\end{align}
 \end{definition}

 Let $Y_{t}$ be a solution of (4.1), for convention, we use the following notations in the sequel
 \begin{align}
& M_{t}^{s}:=\int_{s}^{t}e^{\lambda r}[\langle V_{Y}(r,Y_{r}),h_{ij}(r,Y_{r})\rangle +\frac{1}{2}\langle V_{YY}(r,Y_{r})\sigma_{i}(r,Y_{r}),\sigma_{j}(r,Y_{r})\rangle]d\langle B^{i},B^{j}\rangle_{r}\nonumber\\
&-\int_{s}^{t}e^{\lambda r}\sup_{Q\in \mathcal{Q}}tr[(\langle V_{Y}(t,Y_{t}),h(t,Y_{t})\rangle+\frac{1}{2}\langle V_{YY}(r,Y_{r})\sigma(r,Y_{r}),\sigma(r,Y_{r})\rangle)QQ^{T}]dr,\nonumber\\
&P_{t}^{s}=\int_{s}^{t}\int_{R_{0}^{d}}e^{\lambda r}[V(r,Y_{r^{-}}+K(r,Y_{r},z))-V(r,Y_{r^{-}})]L(dr,dz)\nonumber\\
&-\int_{s}^{t}\sup_{v\in \mathcal{V}}\int_{R_{0}^{d}}e^{\lambda r}[V(r,Y_{r^{-}}+K(r,Y_{r},z))-V(r,Y_{r^{-}})]v(dz)dr.
 \end{align}
From Theorem 2.2 in Peng \cite{S6},  $\{M_{t}^{s}\}_{t\geq s}$ is a $G$-martingale. From Theorem 13 in \cite{K2},  $P_{t}^{s}$ is also a  $G$-martingale.

We are now in a position to propose the mean square exponentially stability for the system (4.1).

\begin{theorem}
Assume that there exist a $V\in v_{0}$, constants $C_{4}>C_{3}>0$ and $\lambda> 0$ such that

(c)  $C_{3}|Y|^{2}\leq V(t,Y)\leq C_{4}|Y|^{2}$ for all $t\geq t_{0},Y\in R^{n},$

(d) $LV(t,Y_{t})\leq -\lambda V(t,Y_{t})$.

Then, the trivial solution of system (4.1) is mean square exponentially stable.
\end{theorem}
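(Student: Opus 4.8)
The plan is to apply the $G$-Itô formula to $e^{\lambda t}V(t,Y_t)$ along the solution of (4.1), and then exploit the $G$-martingale property of the correction terms $M_t^{t_0}$ and $P_t^{t_0}$ recorded just before the statement to kill the stochastic part in sublinear expectation. Concretely, I would first write, for $t\geq t_0$,
\begin{align}
e^{\lambda t}V(t,Y_t)&=e^{\lambda t_0}V(t_0,Y_0)+\int_{t_0}^{t}e^{\lambda r}\big(\lambda V(r,Y_r)+V_r(r,Y_r)+\langle V_Y(r,Y_r),b(r,Y_r)\rangle\big)dr\nonumber\\
&\quad+\int_{t_0}^{t}e^{\lambda r}\big[\langle V_Y,h_{ij}\rangle+\tfrac12\langle V_{YY}\sigma_i,\sigma_j\rangle\big]d\langle B^i,B^j\rangle_r+\int_{t_0}^{t}e^{\lambda r}\langle V_Y(r,Y_r),\sigma_i(r,Y_r)\rangle dB^i_r\nonumber\\
&\quad+\int_{t_0}^{t}\int_{R_0^d}e^{\lambda r}\big[V(r,Y_{r^-}+K(r,Y_r,z))-V(r,Y_{r^-})\big]L(dr,dz).\nonumber
\end{align}
The key algebraic step is then to add and subtract the $\sup_{Q\in\mathcal Q}$-term and the $\sup_{v\in\mathcal V}$-term so that the $d\langle B^i,B^j\rangle$-integral and the $L(dr,dz)$-integral are exactly $M_t^{t_0}+P_t^{t_0}$ plus a drift built from $LV$; i.e. the bracket $\lambda V+V_r+\langle V_Y,b\rangle+\sup_Q tr[\cdots]+\sup_v\int\cdots$ is precisely $\lambda V(r,Y_r)+LV(r,Y_r)$, which by hypothesis (d) is $\leq 0$.

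Next I would take the sublinear expectation $\hat{\mathbb E}$ of both sides. Since $\{M_t^{t_0}\}$ and $\{P_t^{t_0}\}$ are $G$-martingales started from $0$ (as cited from Peng \cite{S6} and \cite{K2}), their $\hat{\mathbb E}$ vanishes; using sub-additivity of $\hat{\mathbb E}$ and the sign of the drift, one gets
$$\hat{\mathbb E}\big[e^{\lambda t}V(t,Y_t)\big]\leq e^{\lambda t_0}V(t_0,Y_0)+\hat{\mathbb E}\Big[\int_{t_0}^{t}e^{\lambda r}\big(\lambda V(r,Y_r)+LV(r,Y_r)\big)dr\Big]\leq e^{\lambda t_0}V(t_0,Y_0).$$
Then I invoke the two-sided bound (c): $C_3|Y_t|^2\le V(t,Y_t)$ and $V(t_0,Y_0)\le C_4|Y_0|^2$, so that $C_3 e^{\lambda t}\hat{\mathbb E}|Y_t|^2\le \hat{\mathbb E}[e^{\lambda t}V(t,Y_t)]\le C_4 e^{\lambda t_0}|Y_0|^2$, giving $\hat{\mathbb E}|Y_t|^2\le (C_4/C_3)|Y_0|^2 e^{-\lambda(t-t_0)}$, which is the claimed mean square exponential stability with $C=C_4/C_3$ (and one replaces $|Y_0|^2$ by $\hat{\mathbb E}|Y_0|^2$ in the general case by conditioning, since $Y_0$ is deterministic here anyway).

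I expect the main obstacle to be the rigorous justification of the $G$-Itô formula for $e^{\lambda t}V(t,Y_t)$ in the jump setting: one needs $V\in v_0$ (the $C^{1,2}$ class with locally Lipschitz $V_{YY}$ introduced in Definition 4.3) to apply the Itô formula for general $G$-Itô–Lévy processes from \cite{K1}, and one must check the integrability of each integrand so that the stochastic integrals genuinely lie in the spaces $M_G^2$ and $H_G^2$ where the $G$-martingale property holds — this is where the growth consequences of hypothesis (c) together with the standing assumption that $b,h_{ij},\sigma_i,K$ admit a global $L_G^2$-solution are used. A secondary subtlety is handling the $\sup_{Q\in\mathcal Q}$ and $\sup_{v\in\mathcal V}$ terms correctly: the Itô formula produces the $d\langle B^i,B^j\rangle$ and $L$ integrals, and turning them into the martingales $M_t^{t_0},P_t^{t_0}$ plus the $\sup$-drift requires the identity that the finite-variation parts $d\langle B^i,B^j\rangle_r$ are controlled by $\sup_Q tr[\,\cdot\,QQ^T]dr$ in the sublinear sense; I would cite the corresponding representation from \cite{S6,K2} rather than re-derive it. Everything else — the $C_r$-inequality bookkeeping and passing limits — is routine.
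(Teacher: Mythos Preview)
Your proposal is correct and follows essentially the same route as the paper: apply the $G$-It\^{o} formula to $e^{\lambda t}V(t,Y_t)$, add and subtract the $\sup_Q$ and $\sup_v$ terms to isolate the $G$-martingales $M_t^{t_0}$, $P_t^{t_0}$ and the $dB^i$-integral, take $\hat{\mathbb{E}}$ and use (d) to drop the nonpositive drift, then finish with the two-sided bound (c). The paper's proof is exactly this, stated more tersely and without your discussion of the integrability checks.
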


{\bf Proof~~~}
For $t\in [t_{0},T]$, applying the $G$-It\^{o} formula (Theorem 32 in \cite{K1}) to $e^{\lambda t}V(t,Y_{t})$, we obtain
\begin{align}
&d(e^{\lambda t}V(t,Y_{t}))=e^{\lambda t}[\lambda V(t,Y_{t})+V_{t}(t,Y_{t})+\langle V_{Y}(t,Y_{t}), b(t,Y_{t})\rangle]dt\nonumber\\
&+e^{\lambda t}\langle V_{Y}(t,Y_{t}),\sigma_{j}(t,Y_{t})\rangle dB_{t}^{i}+e^{\lambda t}\langle V_{Y}(t,Y_{t}),h_{ij}(t,Y_{t})\rangle d\langle B^{i},B^{j}\rangle_{t}\nonumber\\
&+\frac{1}{2}e^{\lambda t}\langle V_{YY}(t,Y_{t})\sigma_{i}(t,Y_{t}),\sigma_{j}(t,Y_{t})\rangle d\langle B^{i},B^{j}\rangle_{t}\nonumber\\
&+\int_{R_{0}^{d}}e^{\lambda t}[V(t,Y_{t^{-}}+K(t,Y_{t},z))-V(t,Y_{t^{-}})]L(dt,dz).
\end{align}
Thus, we have
\begin{align}
&e^{\lambda t}V(t,Y_{t})=e^{\lambda t_{0}}V(t_{0},Y_{0})+\int_{t_{0}}^{t}e^{\lambda r}[\lambda V(r,Y_{r})+LV_{r}(t,Y_{r})]dr+\int_{t_{0}}^{t}e^{\lambda r}\langle V_{Y}(r,Y_{r}),\sigma_{j}(r,Y_{r})\rangle dB_{r}^{j}\nonumber\\
&+M_{t}^{t_{0}}+P_{t}^{t_{0}}.
\end{align}
Since the last three terms are $G$-martingale,  then take expectation on the two sides, we get
\begin{align}
&\hat{\mathbb{E}}[e^{\lambda t}V(t,Y_{t})]\leq\hat{\mathbb{E}}[e^{\lambda t_{0}}V(t_{0},Y_{0})]+\hat{\mathbb{E}}[\int_{t_{0}}^{t}e^{\lambda r}[\lambda V(r,Y_{r})+LV_{r}(t,Y_{r})]dr].
\end{align}
From condition (d), we have
\begin{align}
&\hat{\mathbb{E}}[e^{\lambda t}V(t,Y_{t})]\leq\hat{\mathbb{E}}[e^{\lambda t_{0}}V(t_{0},Y_{0})].
\end{align}
Since $V(t,Y)\leq C_{4}|Y|^{2}$, it holds that $\hat{\mathbb{E}}[e^{\lambda t_{0}}V(t_{0},Y_{0})]\leq C_{4}\hat{\mathbb{E}}[|Y_{0}|^{2}]$  and
$$\hat{\mathbb{E}}[e^{\lambda t}V(t,Y_{t})]\leq  C_{4}\hat{\mathbb{E}}[|Y_{0}|^{2}]e^{\lambda t_{0}},$$
so
$$\hat{\mathbb{E}}|Y_{t}|^{2}\leq\frac{\hat{\mathbb{E}}[V(t,Y_{t})]}{C_{3}}\leq  \frac{C_{4}}{C_{3}}\hat{\mathbb{E}}[|Y_{0}|^{2}]e^{-\lambda (t-t_{0})}.$$

\begin{theorem}
Assume that there exist a $V\in v_{0}$, constants $C_{4}>C_{3}>0$ and $\lambda> 0$ such that

(c)  $C_{3}|Y|^{2}\leq V(t,Y)\leq C_{4}|Y|^{2}$ for all $t\geq t_{0},Y\in R^{n},$

(d1) $LV(t,Y_{t})\leq (-\lambda+\lambda_{1}(t)) V(t,Y_{t})$, where $\lambda_{1}:[t_{0}，+\infty)\rightarrow R$ is a continuous function such that $\int_{t_{0}}^{+\infty}\lambda_{1}^{+}(s)ds<\infty.$

Then, the trivial solution of system (4.1) is mean square exponentially stable.
\end{theorem}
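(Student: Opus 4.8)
The plan is to mimic the proof of Theorem 4.2, but absorb the extra term coming from $\lambda_1$ into the exponential rate by a Gronwall-type argument. As before, for $t \in [t_0,T]$ I would apply the $G$-It\^o formula to $e^{\lambda t}V(t,Y_t)$, obtaining the same decomposition as in (4.6): $e^{\lambda t}V(t,Y_t) = e^{\lambda t_0}V(t_0,Y_0) + \int_{t_0}^t e^{\lambda r}[\lambda V(r,Y_r) + LV(r,Y_r)]\,dr + (\text{stochastic integral in } dB) + M_t^{t_0} + P_t^{t_0}$, where the last three terms are $G$-martingales. Taking $\hat{\mathbb{E}}$ kills (or rather, controls via sub-additivity) the martingale parts, and invoking (d1) in place of (d) gives
\begin{align}
\hat{\mathbb{E}}[e^{\lambda t}V(t,Y_t)] \leq \hat{\mathbb{E}}[e^{\lambda t_0}V(t_0,Y_0)] + \int_{t_0}^t \lambda_1(r)\,\hat{\mathbb{E}}[e^{\lambda r}V(r,Y_r)]\,dr. \nonumber
\end{align}

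Next I would set $\phi(t) := \hat{\mathbb{E}}[e^{\lambda t}V(t,Y_t)]$, a non-negative function, and note that the inequality above reads $\phi(t) \leq \phi(t_0) + \int_{t_0}^t \lambda_1(r)\phi(r)\,dr \leq \phi(t_0) + \int_{t_0}^t \lambda_1^+(r)\phi(r)\,dr$. By Gronwall's inequality this yields $\phi(t) \leq \phi(t_0)\exp\!\big(\int_{t_0}^t \lambda_1^+(r)\,dr\big) \leq \phi(t_0)\exp\!\big(\int_{t_0}^{+\infty}\lambda_1^+(s)\,ds\big) =: \phi(t_0)\,e^{N}$, where $N := \int_{t_0}^{+\infty}\lambda_1^+(s)\,ds < \infty$ by hypothesis. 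Unwinding the definition of $\phi$ and using condition (c) on both sides — $\hat{\mathbb{E}}[e^{\lambda t_0}V(t_0,Y_0)] \leq C_4 e^{\lambda t_0}\hat{\mathbb{E}}[|Y_0|^2]$ and $C_3|Y_t|^2 \leq V(t,Y_t)$ — I would conclude
\begin{align}
\hat{\mathbb{E}}|Y_t|^2 \leq \frac{\hat{\mathbb{E}}[V(t,Y_t)]}{C_3} \leq \frac{e^{-\lambda t}}{C_3}\,\phi(t) \leq \frac{C_4\, e^{N}}{C_3}\,\hat{\mathbb{E}}[|Y_0|^2]\,e^{-\lambda(t-t_0)}, \nonumber
\end{align}
which is precisely mean square exponential stability with constant $C = (C_4/C_3)e^{N}$ and rate $\lambda$, both independent of $t_0$ (since $N$ can be bounded by $\int_{-\infty}^{+\infty}$ or, more carefully, one observes $\int_{t_0}^{+\infty}\lambda_1^+ \leq \int_{0}^{+\infty}\lambda_1^+$ when $t_0 \geq 0$).

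The only genuinely new ingredient compared with Theorem 4.2 is the Gronwall step, which is completely standard; the main thing to be careful about is the handling of the martingale terms under the sublinear expectation. Here I would rely on the fact, already recorded in the excerpt, that $M_t^{s}$ is a $G$-martingale (Theorem 2.2 in Peng \cite{S6}) and $P_t^{s}$ is a $G$-martingale (Theorem 13 in \cite{K2}), together with the martingale property of the $dB$ integral, so that sub-additivity of $\hat{\mathbb{E}}$ gives $\hat{\mathbb{E}}[\,\cdot + (\text{martingales})\,] \leq \hat{\mathbb{E}}[\,\cdot\,]$ exactly as in the passage from (4.6) to (4.8). A minor technical point is that $\lambda_1$ need not be non-negative, so replacing $\lambda_1$ by $\lambda_1^+$ in the integral inequality (legitimate because $\phi \geq 0$) is what makes the Gronwall bound finite; I would flag this explicitly. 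No obstacle of substance is expected.
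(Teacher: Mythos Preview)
Your proposal is correct and follows essentially the same route as the paper: apply the $G$-It\^o formula to $e^{\lambda t}V(t,Y_t)$, take $\hat{\mathbb{E}}$ and drop the $G$-martingale terms, use (d1) to bound the integrand by $\lambda_1^+(r)\,e^{\lambda r}V(r,Y_r)$, apply Gronwall, and finish with condition (c). (A small slip: you wrote ``mimic the proof of Theorem 4.2'' and referenced the passage ``from (4.6) to (4.8)'', but the statement you are proving \emph{is} Theorem 4.2 --- you mean Theorem 4.1 and the passage from (4.5) to (4.6); this is purely cosmetic.)
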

{\bf Proof~~~}
Since $\int_{t_{0}}^{+\infty}\lambda_{1}^{+}(s)ds<\infty$, it follows that there exists a positive constant $M_{1}$, such that $\int_{t_{0}}^{+\infty}\lambda_{1}^{+}(s)ds<M_{1}.$
With the same discussions as in Theorem 4.1, applying the $G$-It\^{o} formula to $e^{\lambda t}V(t, Y_{t})$, we obtain
\begin{align}
&\hat{\mathbb{E}}[e^{\lambda t}V(t,Y_{t})]\leq\hat{\mathbb{E}}[e^{\lambda t_{0}}V(t_{0},Y_{0})]+\hat{\mathbb{E}}[\int_{t_{0}}^{t}e^{\lambda r}[\lambda V(r,Y_{r})+LV_{r}(t,Y_{r})]dr].
\end{align}
From condition (d1), we have
\begin{align}
&\hat{\mathbb{E}}[e^{\lambda t}V(t,Y_{t})]\leq\hat{\mathbb{E}}[e^{\lambda t_{0}}V(t_{0},Y_{0})]+\hat{\mathbb{E}}[\int_{t_{0}}^{t}e^{\lambda r}\lambda_{1}(r) V(r,Y_{r})dr]\nonumber\\
&\leq \hat{\mathbb{E}}[e^{\lambda t_{0}}V(t_{0},Y_{0})]+\int_{t_{0}}^{t}\lambda_{1}^{+}(r)\hat{\mathbb{E}}[e^{\lambda r} V(r,Y_{r})]dr.
\end{align}
By the Gronwall inequality and condition (c), we have
\begin{align}
&\hat{\mathbb{E}}[e^{\lambda t}V(t,Y_{t})]\leq\hat{\mathbb{E}}[e^{\lambda t_{0}}V(t_{0},Y_{0})]e^{\int_{t_{0}}^{t}\lambda_{1}^{+}(r)dr}\nonumber\\
&\leq C_{4}\hat{\mathbb{E}}[|Y_{0}|^{2}]e^{\lambda t_{0}}e^{\int_{t_{0}}^{t}\lambda_{1}^{+}(r)dr}\leq M_{1} C_{4}\hat{\mathbb{E}}[|Y_{0}|^{2}]e^{\lambda t_{0}},
\end{align}
so
\begin{align}
&\hat{\mathbb{E}}|Y_{t}|^{2}\leq\frac{\hat{\mathbb{E}}[V(t,Y_{t})]}{C_{3}}\leq \frac{M_{1} C_{4}}{C_{3}}\hat{\mathbb{E}}[|Y_{0}|^{2}]e^{-\lambda (t-t_{0})}.
\end{align}

The following theorem shows that the solution of system (4.1) is quasi sure exponentially stable under some additional conditions.

\begin{theorem}
Assume that there exist a $V\in v_{0}$, positive constants $C_{3},C_{4}, \lambda$ and $\alpha$ such that

(c)  $C_{3}|Y|^{2}\leq V(t,Y)\leq C_{4}|Y|^{2}$ for all $t\geq t_{0},Y\in R^{n},$

(d1) $LV(t,Y_{t})\leq (-\lambda+\lambda_{1}(t)) V(t,Y_{t})$, where $\lambda_{1}:[t_{0}，+\infty)\rightarrow R$ is a continuous function such that $\int_{t_{0}}^{+\infty}\lambda_{1}^{+}(s)ds<\infty.$

(e) $\hat{\mathbb{E}}[|b(t,Y_{t})|^{2}+|h_{ij}(t,Y_{t})|^{2}+|\sigma_{i}(t,Y_{t})|^{2}+\sup_{v\in\mathcal{V}}\int_{R_{0}^{d}}|K(t,Y_{t},z)|^{2}v(dz)]<\alpha\hat{\mathbb{E}}[|Y_{t}|^{2}].$

Then, the trivial solution of system (4.1) is quasi sure exponentially stable.
\end{theorem}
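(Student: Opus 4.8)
The plan is to combine the mean square decay already obtained in Theorem 4.2 with a pathwise Borel--Cantelli argument carried out over the unit intervals $[t_0+n,t_0+n+1]$, $n\in\mathbb{N}$, condition (e) being exactly what is needed to control the oscillation of $Y$ inside each such interval. Since conditions (c) and (d1) are precisely the hypotheses of Theorem 4.2, I would first invoke it to get a constant $C^{*}>0$ (one may take $C^{*}=M_1C_4/C_3$) with
$$\hat{\mathbb{E}}[|Y_t|^2]\le C^{*}\hat{\mathbb{E}}[|Y_0|^2]e^{-\lambda(t-t_0)}\qquad\text{for all }t\ge t_0.$$

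The next step is to estimate $\hat{\mathbb{E}}[\sup_{t_0+n\le t\le t_0+n+1}|Y_t|^2]$. Writing (4.1) in integral form on $[t_0+n,t]$, using the $C_r$-inequality to separate the five terms, and bounding the $ds$-term by the Cauchy--Schwarz inequality, the $d\langle B^i,B^j\rangle$-term by Lemma 2.5, the $dB^i$-term by Lemma 2.4 and the $L(ds,dz)$-term by Lemma 2.3, one obtains
$$\hat{\mathbb{E}}\Big[\sup_{t_0+n\le t\le t_0+n+1}|Y_t|^2\Big]\le C\Big(\hat{\mathbb{E}}[|Y_{t_0+n}|^2]+\int_{t_0+n}^{t_0+n+1}\hat{\mathbb{E}}\big[|b(s,Y_s)|^2+|h_{ij}(s,Y_s)|^2+|\sigma_i(s,Y_s)|^2+\sup_{v\in\mathcal{V}}\int_{R_{0}^{d}}|K(s,Y_s,z)|^2v(dz)\big]ds\Big),$$
where $C$ does not depend on $n$ since each interval has unit length, so the interval-length factors appearing in the BDG-type inequalities of Lemmas 2.3--2.5 are all bounded by a universal constant. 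Applying condition (e) to the integrand and then the first step (for $s\in[t_0+n,t_0+n+1]$ one has $\hat{\mathbb{E}}[|Y_s|^2]\le C^{*}\hat{\mathbb{E}}[|Y_0|^2]e^{-\lambda n}$, and likewise for $\hat{\mathbb{E}}[|Y_{t_0+n}|^2]$) gives a constant $K>0$ independent of $n$ with $\hat{\mathbb{E}}[\sup_{t_0+n\le t\le t_0+n+1}|Y_t|^2]\le K\hat{\mathbb{E}}[|Y_0|^2]e^{-\lambda n}$.

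Now fix $\varepsilon\in(0,\lambda)$ and set $A_n:=\{\sup_{t_0+n\le t\le t_0+n+1}|Y_t|>e^{-(\lambda-\varepsilon)n/2}\}$. By Lemma 2.1 (Chebyshev's inequality for the capacity $c$),
$$c(A_n)\le\frac{\hat{\mathbb{E}}[\sup_{t_0+n\le t\le t_0+n+1}|Y_t|^2]}{e^{-(\lambda-\varepsilon)n}}\le K\hat{\mathbb{E}}[|Y_0|^2]e^{-\varepsilon n},$$
so $\sum_n c(A_n)<\infty$. Because $c=\sup_{P\in\mathfrak{B}}P$ is countably sub-additive, $c(\limsup_n A_n)\le\inf_N\sum_{n\ge N}c(A_n)=0$, hence outside a polar set each path admits an index $n_0$ with $\sup_{t_0+n\le t\le t_0+n+1}|Y_t|\le e^{-(\lambda-\varepsilon)n/2}$ for all $n\ge n_0$. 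Finally, for $t\ge t_0+n_0$ choose $n$ with $t\in[t_0+n,t_0+n+1]$; then $n\ge t-t_0-1$, so $|Y_t|\le e^{-(\lambda-\varepsilon)(t-t_0-1)/2}$ and $\frac1t\ln|Y_t|\le-\frac{(\lambda-\varepsilon)(t-t_0-1)}{2t}$. Letting $t\to\infty$ and then $\varepsilon\downarrow0$ yields $\limsup_{t\to\infty}\frac1t\ln|Y_t|\le-\tfrac{\lambda}{2}<0$ q.s., which is the asserted quasi sure exponential stability.

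The main obstacle I anticipate is the second step: producing the bound $\hat{\mathbb{E}}[\sup_{t_0+n\le t\le t_0+n+1}|Y_t|^2]\le K\hat{\mathbb{E}}[|Y_0|^2]e^{-\lambda n}$ with a constant $K$ uniform in $n$. This forces one to track carefully that every constant coming from the $C_r$-inequality and from the BDG-type inequalities of Lemmas 2.3--2.5 depends only on the fixed interval length $1$, and that condition (e) together with Theorem 4.2 turns the coefficient integrals into terms of order $e^{-\lambda n}$; once this uniform estimate is secured, the capacity form of Borel--Cantelli (legitimate since $c$ is countably sub-additive) and the elementary interval-wise argument complete the proof.
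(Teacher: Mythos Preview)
Your proposal is correct and follows essentially the same route as the paper: invoke Theorem 4.2 for the mean-square decay, use the $C_r$-inequality together with Lemmas 2.3--2.5 and condition (e) to bound $\hat{\mathbb{E}}[\sup_{n\tau\le t\le (n+1)\tau}|Y_t|^2]$ by a constant times $e^{-\lambda n}$, and then apply the capacity Chebyshev inequality (Lemma 2.1) plus Borel--Cantelli over the intervals $[n\tau,(n+1)\tau]$ to conclude $\limsup_{t\to\infty}\tfrac1t\ln|Y_t|\le -\tfrac{\lambda}{2}$ q.s. The paper works with a generic interval length $\tau$ where you take $\tau=1$, and your explicit justification of Borel--Cantelli via countable sub-additivity of $c$ is a detail the paper simply asserts; otherwise the arguments match.
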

{\bf Proof~~~} The conditions of Theorem 4.3 imply that all the conditions of Theorem 4.2 hold, so the solution of system (4.1) is mean square exponentially stable. Therefore, there exist a positive constant $M_{2}$  such that
\begin{align}
&\hat{\mathbb{E}}[|Y_{t}|^{2}]\leq M_{2}e^{-\lambda (t-t_{0})}.
\end{align}
In fact,
\begin{align}
&Y_{t+s}=Y_{t}+\int_{t}^{t+s}b(r,Y_{r})dr+\int_{t}^{t+s}h_{ij}(r,Y_{r})d\langle B^{i}, B^{j}\rangle_{r}\nonumber\\
&+\int_{t}^{t+s}\sigma_{i}(r,Y_{r}) dB^{i}_{r}+\int_{t}^{t+s}\int_{R_{0}^{d}}K(r,Y_{r},z)L(dr,dz).
\end{align}
By $C_{r}$-inequality, it holds that
\begin{align}
&|Y_{t+s}|^{2}\leq5[|Y(t)|^{2}+|\int_{t}^{t+s}b(r,Y_{r})dr|^{2}+|\int_{t}^{t+s}h_{ij}(r,Y_{r})d\langle B^{i}, B^{j}\rangle_{r}|^{2}\nonumber\\
&+|\int_{t}^{t+s}\sigma_{i}(r,Y_{r}) dB^{i}_{r}|^{2}+|\int_{t}^{t+s}\int_{R_{0}^{d}}K(r,Y_{r},z)L(dr,dz)|^{2}].
\end{align}
Furthermore, we obtain
\begin{align}
&\hat{\mathbb{E}}[\sup_{0\leq s\leq \tau}|Y_{t+s}|^{2}]\leq5\{\hat{\mathbb{E}}|Y_{t}|^{2}+\hat{\mathbb{E}}[\int_{t}^{t+\tau}|b(r,Y_{r})|dr]^{2}+\hat{\mathbb{E}}[\sup_{0\leq s\leq \tau}|\int_{t}^{t+s}h_{ij}(r,Y_{r})d\langle B^{i},B^{j}\rangle_{r}|^{2}]\nonumber\\
&+\hat{\mathbb{E}}[\sup_{0\leq s\leq \tau}|\int_{t}^{t+s}\sigma_{i}(r,Y_{r}) dB^{i}_{r}|^{2}]+\hat{\mathbb{E}}[\sup_{0\leq s\leq \tau}|\int_{t}^{t+s}\int_{R_{0}^{d}}K(r,Y_{r},z)L(dr,dz)|^{2}]\},
\end{align}
where $\tau$ is a positive constant.

By the H\"{o}lder inequality, (e) and (4.12), we have
\begin{align}
&\hat{\mathbb{E}}[\int_{t}^{t+\tau}|b(r,Y_{r})|dr]^{2}\leq \tau\int_{t}^{t+\tau}\hat{\mathbb{E}}|b(r,Y_{r})|^{2}dr\nonumber\\
&\leq \tau\int_{t}^{t+\tau}\alpha\hat{\mathbb{E}}|Y_{r}|^{2}dr\leq \frac{\alpha M_{2}\tau}{\lambda}e^{-\lambda (t-t_{0})}.
\end{align}
By Lemma 2.5, (e) and (4.12), we have
\begin{align}
&\hat{\mathbb{E}}[\sup_{0\leq s\leq \tau}|\int_{t}^{t+s}h_{ij}(r,Y_{r}))d\langle B^{i}, B^{j}\rangle_{r}|^{2}]\leq C'_{2}\tau\int_{t}^{t+\tau}\hat{\mathbb{E}}|h_{ij}(r,Y_{r})|^{2}dr\nonumber\\
&\leq C'_{2}\alpha\tau\int_{t}^{t+\tau}\hat{\mathbb{E}}|Y_{r}|^{2}dr\leq\frac{ C'_{2}\alpha\tau M_{2}}{\lambda}e^{-\lambda (t-t_{0})}.
\end{align}
By Lemma 2.4, (e) and (4.12), we have
\begin{align}
&\hat{\mathbb{E}}[\sup_{0\leq s\leq \tau}|\int_{t}^{t+s}\sigma_{i}(r,Y_{r})dB^{i}_{r}|^{2}]\leq C_{2}\int_{t}^{t+\tau}\hat{\mathbb{E}}|\sigma_{i}(r,Y_{r})|^{2}dr\nonumber\\
&\leq C_{2}\alpha\int_{t}^{t+\tau}\hat{\mathbb{E}}|Y_{r}|^{2}dr\leq\frac{ C_{2}\alpha M_{2}}{\lambda}e^{-\lambda (t-t_{0})}.
\end{align}
Similarly, by Lemma 2.3, we have
\begin{align}
&\hat{\mathbb{E}}[\sup_{0\leq s\leq \tau}|\int_{t}^{t+s}\int_{R_{0}^{d}}K(r,Y_{r},z)L(dr,dz)|^{2}]\}\leq C_{\tau}\int_{t}^{t+\tau}\hat{\mathbb{E}}[\sup_{v\in \mathcal{V}}\int_{R_{0}^{d}}|K(r,Y_{r},z)|^{2}v(dz)]dr\nonumber\\
&\leq C_{\tau}\alpha\int_{t}^{t+\tau}\hat{\mathbb{E}}|Y_{r}|^{2}dr\leq\frac{ C_{\tau}\alpha M_{2}}{\lambda}e^{-\lambda (t-t_{0})}.
\end{align}
Substituting (4.16)-(4.19) into (4.15), we get
\begin{align}
&\hat{\mathbb{E}}[\sup_{0\leq s\leq \tau}|Y_{t+s}|^{2}]\leq M_{3}e^{-\lambda t},
\end{align}
where $M_{3}>0$ is a constant. Then for $n=1,2,\ldots,$ it follows that
\begin{align}
&\hat{\mathbb{E}}[\sup_{n\tau\leq t\leq (n+1)\tau}|Y_{t}|^{2}]\leq M_{3}e^{-\lambda n\tau}.
\end{align}
Hence, for an arbitrary $\varepsilon\in (0,\lambda)$ and $n\in \mathbb{N}$, from Lemma 2.1, we derive that
$$c(w:\sup_{n\tau\leq t\leq (n+1)\tau}|Y_{t}|^{2}>e^{-(\lambda-\varepsilon)n\tau})\leq M_{3}e^{-\varepsilon n\tau}.$$
Using the Borel-Cantelli Lemma,  we deduce that there exists a $n_{0}(w)$ such that for almost all $w\in \Omega$, $n>n_{0}(w),$
$$\sup_{n\tau\leq t\leq (n+1)\tau}|Y_{t}|^{2}\leq e^{-(\lambda-\varepsilon)n\tau},  \quad q.s.$$
Then, for $n\tau\leq t\leq (n+1)\tau$,
\begin{align}
&\frac{\ln|Y_{t}|}{t}=\frac{\ln|Y_{t}|^{2}}{2t}\leq \frac{\ln\sup_{n\tau\leq t\leq (n+1)\tau}|Y_{t}|^{2}}{2n\tau}\leq \frac{-(\lambda-\varepsilon)}{2}  \quad q.s.
\end{align}
Taking $lim sup$ in (4.22) leads to quasi-surely exponential estimate, that is,
\begin{align}
&\limsup_{t\rightarrow\infty}\frac{\ln|Y_{t}|}{t}\leq \frac{-(\lambda-\varepsilon)}{2}  \quad q.s.
\end{align}
Letting $\varepsilon\rightarrow 0$, we obtain the desired result.

\section{An example }  \label{s4}
In this section, an example is given to illustrate the effectiveness of the obtained results  in section 4.

We consider the following one dimension $G$-stochastic differential equations
\begin{equation}
\begin{cases}
dY_{t}=-2Y_{t}dt-\frac{\sin^{2}t}{2(1+t^{2})}Y_{t}d\langle B, B\rangle_{t}+(1+\frac{|sint|}{\sqrt{1+t^{2}}}) Y_{t} dB_{t}+\int_{R_{0}^{d}}Y_{t}R(z)L(dt,dz),
\\
Y_{t_{0}}=Y_{0}
\end{cases}
\end{equation}
where $B$ is a one dimension $G$-Brownian motion. Let $b(t,Y_{t})=-2Y_{t},$  $h(t,Y(t))=-\frac{\sin^{2}t}{2(1+t^{2})}Y_{t}$,  $\sigma(t,Y_{t})=(1+\frac{|sint|}{\sqrt{1+t^{2}}})Y_{t}$, $K(t,Y(t),z)=R(z)Y_{t}$, $V(t,Y)=|Y|^{2}$
and the function $R(z)$ is assumed to satisfy
 $$\sup_{v\in \mathcal{V}}\int_{R_{0}^{d}}|R(z)|^{2}v(dz)< k  $$
and $$\sup_{v\in \mathcal{V}}\int_{R_{0}^{d}}[(1+R(z))^{2}-1]v(dz)=:l<3.$$
Then we have
$$V_{Y}(t,Y)b(t,Y_{t})=-4Y^{2}，$$
$$V_{Y}(t,Y)h(t,Y_{t})=-\frac{\sin^{2}t}{1+t^{2}}Y^{2},$$
$$V_{YY}(t,Y)\sigma^{2}(t,Y_{t})=2(1+\frac{|\sin t|}{\sqrt{1+t^{2}}})^{2}Y^{2},$$
\begin{align}
&LV(t,Y)=-4Y^{2}-\frac{\sin^{2}t}{1+t^{2}}Y^{2}+(1+\frac{|\sin t|}{\sqrt{1+t^{2}}})^{2}Y^{2}+\sup_{v\in \mathcal{V}}\int_{R_{0}^{d}}[(1+R(z))^{2}-1]Y^{2}v(dz)\nonumber\\
&\leq -3Y^{2}+\frac{2|\sin t|}{\sqrt{1+t^{2}}}Y^{2}+\sup_{v\in \mathcal{V}}\int_{R_{0}^{d}}[(1+R(z))^{2}-1]v(dz)Y^{2}
\nonumber\\
&\leq (-3+l)Y^{2}+\frac{2|\sin t|}{\sqrt{1+t^{2}}}Y^{2}.
\end{align}
Let $\lambda_{1}(t)=\frac{2|\sin t|}{\sqrt{1+t^{2}}}$, we can see $\int_{t_{0}}^{\infty}\lambda_{1}^{+}(s)ds<\infty$. From Theorem 4.2, we easily know that the solution of system (5.1)  is exponentially stable in mean square. Moreover
$$|b(t,Y(t))|^{2}=4 |Y|^{2}, \quad  \quad |h(t,Y(t))|^{2}\leq |Y|^{2},  \quad  \quad|\sigma(t,Y(t))|^{2}\leq 4|Y|^{2},$$
$$|\sup_{v\in \mathcal{V}}\int_{R_{0}^{d}}K(t,Y,z)v(dz)|^{2}\leq k|Y|^{2}.$$
So, we have
$$\hat{\mathbb{E}}[|b(t,Y_{t})|^{2}+|h(t,Y_{t})|^{2}+|\sigma(t,Y_{t})|^{2}+\sup_{v\in\mathcal{V}}\int_{R_{0}^{d}}|K(t,Y_{t},z)|^{2}v(dz)]<(9+k)\hat{\mathbb{E}}[|Y_{t}|^{2}].$$
From Theorem 4.3,  letting $\alpha=9+k$,  we easily know that the solution of system (5.1) is quasi sure exponentially stable.

 %%%%%%%%%%%%%%%%%%%%%%%%%%%%%%%%%%%%%%%%%%%%%%%%%%%%%%%%%%%%%%%%%%%%%%%%

\renewcommand{\theequation}{A\arabic{equation}}
\setcounter{equation}{0}
%
%\footnotesize \noindent\textbf{Appendix A}
%\\
%
%If there are appendices, please write them here.

%\end{comment}
%\end{CJK*}
\end{document}